\theoremstyle{plain}
\numberwithin{equation}{section}
\newtheorem{Theorem}{Theorem}[section]
\newtheorem{Lemma}{Lemma}[section]
\theoremstyle{definition}
\newcommand\diverg{\mathop{\mbox{\rm div}}}
\newcommand\curl{\mathop{\mbox{\rm curl}}}
\newcommand{\definition}{{\lower .5ex
\hbox{$\>\>\stackrel{\triangle}{=}\>\>$} }}
\begin{document}
 \begin{center}
 {\Large\bf{Optimal time decay estimation for large-solution about 3D compressible MHD equations }
 }
\footnote{$^{\dag}$
Corresponding Author: feichenstudy@163.com (Fei Chen).}
\footnote{$^{**}$
 Email Address: feichenstudy@163.com (Fei Chen); shuai172021@163.com (Shuai Wang); wcb1216@163\\.com (Chuanbao Wang).}
 \footnote{$^{***}$
This work was supported by the National Natural Science Foundation of China [grant number 12101345] and Natural Science Foundation of Shandong Province of China [grant number ZR2021QA017].}

 Shuai Wang$^{*}$, \ Fei Chen$^{*\dag  }$, \ Chuanbao Wang$^{*}$
\\[1ex]
$^* $ School of Mathematics and Statistics, Qingdao University, \\[0.5ex]
Qingdao, Shandong 266071,  China
 \end{center}

 \begin{center}
 \begin{minipage}{15cm}
 \par
 \small  {\bf Abstract:} This paper mainly focus on optimal time decay estimation for large-solution about compressible magnetohydrodynamic equations in 3D whole space, provided that $(\sigma_{0}-1,u_{0},M_{0})\in L^1\cap H^2$. In \cite{Chenyuhui} (Chen et al.,2019), they proved time decay estimation of $\|(\sigma-1,u,M)\|_{H^1}$ being $(1+t)^{-\frac{3}{4}}$. Based on it, we obtained  that of $\|\nabla(\sigma-1,u,M)\|_{H^1}$ being $(1+t)^{-\frac{5}{4}}$ in \cite{Womensa}. Therefore, we are committed to improving that of $\|\nabla^2 (\sigma-1,u,M)\|_{L^2}$ in this paper. Thanks to the method adopted in \cite{Wangwenjun} (Wang and Wen, 2021), we get the optimal time decay estimation to the highest-order derivative for space of solution, which means that time decay estimation of $\|\nabla^2 (\sigma-1,u,M)\|_{L^2}$ is $(1+t)^{-\frac{7}{4}}$.
 \par
 {\bf Keywords:} Decay estimation; Large-solution; Compressible magnetohydrodynamic equations\\
 {\bf Mathematics Subject Classification (2010):}~~~35B45; 35Q35; 35B40

 \end{minipage}
 \end{center}
  \allowdisplaybreaks
 \vskip2mm
 {\section{Introduction }}
\par
We introduce  the following compressible magnetohydrodynamic (CMHD) equations for $(x,t)\in R^{3}\times R^{+}$
 \begin{eqnarray}\label{1.1}
 \begin{cases}
   \partial_{t}\sigma+\diverg(\sigma u)=0,\\
   \partial_{t}(\sigma u)+\diverg(\sigma u\otimes u)-\mu\Delta u-(\mu+\lambda)\nabla\diverg u+\nabla P-(\curl M)\times M=0,\\
   \partial_{t}M-\nu\Delta M-\curl(u\times M)=0,\quad \diverg M=0,\\
 \end{cases}
\end{eqnarray}
 the functions are density $\sigma\in R^{+}$, velocity field $u\in R^{3}$, magnetic field $M\in R^{3}$ and pressure $P=P(\sigma)=\sigma^\gamma $ (adiabatic exponent $\gamma$ satisfies $\gamma \geq 1$ ). $\mu$ and $\lambda$ with $\mu >0$ and $2\mu+3\lambda>0$ are viscosity coefficients, in addition,  $\nu>0$ expresses magnetic diffusion coefficient. We supplement (\ref{1.1}) by giving the following initial data
  \begin{eqnarray}\label{1.2}
  (\sigma,u,M)(x,0)=(\sigma_{0},u_{0},M_{0})(x)\rightarrow(1,0,0),\quad as~~|x|\rightarrow +\infty.
  \end{eqnarray}
 \par
    About 3 dimensions CMHD equations, above all, we recall some well-posedness results. For classical solutions: global existence under conditions of being vacuum and small energy \cite{Lihailiang}; for large-solutions: local existence \cite{Fanjishan3,Vol'pert}, well-posedness for initial-boundary value question in $H^1$ \cite{Wangdehua}, global existence with $\nu$ needing large enough and $\gamma$ being close to $1$ \cite{Hongguangyi}, as well as global existence and time decay estimations under $L^l(1\leq l<\frac{6}{5})\cap H^3$ \cite{Chenqing} or $L^1\cap H^2$ \cite{Chenyuhui}. For weak solutions, the well-posedness and time decay estimations can be read in \cite{Ducomet,Huxianpeng1,Huxianpeng2,SLiu,Suen1,Suen2,Wu}.
 \par
 Next, we  emphasis  some related paper for time decay estimations about (\ref{1.1}). In regard to small-solutions, time decay estimation on $\mathbb{T}^3$ can be found in Zhu with Zi \cite{Zhulimei}. In the case that  low-frequency parts of initial data are in some $\dot{B}_{2,\infty}^{-b}$ $(b_{1}\geq 2, b_{2}=\frac{2b_{1}}{q}-\frac{b_{1}}{2}, 1-\frac{b_{1}}{2}<b\leq b_{2})$, time decay estimation of $L^\mathfrak{L}$-solution $\big(\mathfrak{L}\in[2,\min(\frac{2b_{1}}{b_{1}-2},4)]\big)$  was studied by Shi with Zhang \cite{Shi}. If initial data is  in $L^l(1\leq l<\frac{6}{5})\cap H^3$, Chen with Tan \cite{Chenqing} proved global existence and showed (\ref{1}) with $f=0,1$; $\alpha=\frac{3}{2}(\frac{1}{l}-\frac{1}{2})+\frac{f}{2}$
 \begin{eqnarray}\label{1}
  \|\nabla^{f}(\sigma-1,u,M)\|_{H^{3-f}}\leq C(1+t)^{-\alpha},
  \end{eqnarray}
  besides, Li with Yu \cite{Lifucai} also got (\ref{1}) in case of $l=1$, as well as Zhang with Zhao \cite{Zhangjianwen} additional proved
  \begin{eqnarray}\label{111}
  \|\partial_{t}(\sigma-1,u,M)\|_{H^1}\leq C(1+t)^{-\frac{3}{2}(\frac{1}{l}-\frac{1}{2})-\frac{1}{2}},
  \end{eqnarray}
   similarly, Pu with Guo \cite{Puxueke} obtained (\ref{1}) to full CMHD equations.  On this basis of (\ref{1}), Gao, Chen with Yao \cite{Gao1} showed (\ref{2}) with $f=2,3$; $\beta_{1}=\frac{3}{2}(\frac{1}{l}-\frac{1}{2})+1$; $\beta_{2}=\frac{3}{2}(\frac{1}{l}-\frac{1}{2})+\frac{f}{2}$
  \begin{eqnarray}\label{2}
  \begin{aligned}
  &\|\nabla^2(\sigma-1,u)\|_{H^1}\leq C(1+t)^{-\beta_{1}},\\
  &\|\nabla^{f} M\|_{H^{3-f}}\leq C(1+t)^{-\beta_{2}},
  \end{aligned}
  \end{eqnarray}
  similarly, Gao, Tao with Yao \cite{Gao2} gained that to full CMHD equations. It's obviously that the decay rates about the higher-order derivatives for space in (\ref{2})  is faster than that in (\ref{1}). Based on Guo with Wang \cite{Guoyan}, Tan with Wang \cite{Tanzhong} established (\ref{3}) which is about the higher-order derivatives for space in the event that initial data is under $H^L (L\geq 3)\cap \dot{H}^n(0\leq n<\frac{3}{2})$
  \begin{eqnarray}\label{3}
  \|\nabla^{f}(\sigma-1,u,M)\|_{H^{L-f}}\leq C(1+t)^{-\frac{f+n}{2}},\quad f=0,1,...,L-1.
  \end{eqnarray}
In addition, in case of $H^S(S\geq 3)\cap \dot{B}^{-s}_{2,\infty}(0\leq s\leq\frac{5}{2})$, Huang, Lin with Wang \cite{Huangwenting} proved time decay estimation about the highest-order derivatives for space
\begin{eqnarray}\label{31}
  \|\nabla^{f}(\sigma-1,u,M)\|_{L^2}\leq C(1+t)^{-\frac{f+s}{2}},\quad 0\leq f\leq S.
  \end{eqnarray}
\par
As for large-solutions,  recently, Chen, Huang with Xu \cite{Chenyuhui} studied global stability and time decay estimation for large-solution in case of $(\sigma_{0}-1,u_{0},M_{0})\in L^1\cap H^2$
\begin{eqnarray}\label{4}
 \|(\sigma-1,u,M)\|_{H^1}
\leq C(1+t)^{-\frac{3}{4}},
  \end{eqnarray}
 based on that, Gao, Wei with Yao \cite{Gao3} gained that about the higher-order derivatives for space of $M$
 \begin{eqnarray}\label{5}
\|\nabla M\|_{H^1}+\| M_t\|_{L^2}\leq C(1+t)^{-\frac{5}{4}},
\end{eqnarray}
and we \cite{Womensa} proved that about the higher-order derivatives for space of $(\sigma,u,M)$
\begin{eqnarray}\label{we}
\|\partial_{t}(\sigma-1,u,M)\|_{L^2}+\|\nabla(\sigma-1,u,M)\|_{H^1}\leq C(1+t)^{-\frac{5}{4}}.
\end{eqnarray}
Hence, in our study, from (\ref{we}), we expect to know:
 whether we can improve time decay estimation of  $\|\nabla^2(\sigma-1,u,M)\|_{L^2}$, which is faster than $(1+t)^{-\frac{5}{4}}$?\\
{\bf Notation:}  Throughout the paper, constant $C>0$  is not about time, which may be variable in different lines. Besides, for purpose of simplifying the writing, denote $\|\cdot\|_{L^2}=\|\cdot\|$, $\|\cdot\|_{H^p}=\|\cdot\|_{p}$ , $\|w_{1}\|_{W}+\|w_{2}\|_{W}=\|(w_{1},w_{2})\|_{W}$ as well as  $\|w_{1}\|_{W}^v+\|w_{2}\|_{W}^v=\|(w_{1},w_{2})\|_{W}^v$.
 \par
  Write Theorem \ref{Theorem1.1} proved in \cite{Chenyuhui} because of the usage more than once in our study.
\begin{Theorem}\label{Theorem1.1}
 Assume conditions are as follows: $\mu>\frac{1}{2}\lambda$, $(\sigma,u,M)$ is a smooth global solution about (\ref{1.1}), $\sigma\in[0,N_{1}]$, $(\sigma_{0},u_{0},M_{0})$ with $0 < a\leq \sigma_{0} $  satisfies admissible conditions, $(\sigma_{0}-1,u_{0},M_{0})\in L^1\cap H^2$, $\sup_{t\geq 0}\|\sigma(\cdot,t)\|_{C^{\beta}}+\sup_{t\geq 0}\|M(\cdot,t)\|_{L^\infty}\leq N_{2}$ with $ 0<\beta<1 $, so, for $\forall\, t>0$, $\exists~~\underline{\sigma}=\underline{\sigma}(a,N_{2})>0$,  it holds
\begin{eqnarray}\label{Th1}
\sigma(x,t)\geq\underline{\sigma},
\end{eqnarray}
\begin{eqnarray}\label{Th2}
\begin{aligned}
\|(\sigma-1,u,M)\|_{2}^2
+\int_{0}^\infty\Big(\|\nabla(\sigma-1)(\tau)\|_{1}^2+\|\nabla (u,M)(\tau)\|_{2}^2 \Big)d\tau \leq C_{0},
\end{aligned}
\end{eqnarray}
\begin{eqnarray}\label{Th3}
\begin{aligned}
\|(\sigma-1,u,M)\|_{1}
\leq C_{0}(1+t)^{-\frac{3}{4}}.
\end{aligned}
\end{eqnarray}
\end{Theorem}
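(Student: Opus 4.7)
The conclusion breaks into three separate assertions, so I would treat them in sequence, using each to feed the next. The overall strategy is: first establish the pointwise lower bound (\ref{Th1}), then upgrade the basic energy identity to an $H^2$ a priori estimate (\ref{Th2}), and finally run a linearized / Duhamel argument against a nonlinear bootstrap to produce the $L^1 \cap L^2$--type decay rate (\ref{Th3}).

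\textbf{Step 1: lower bound on $\sigma$.} Rewrite the momentum equation in terms of the effective viscous flux $F=(2\mu+\lambda)\diverg u-(P(\sigma)-P(1))-\tfrac12|M|^2$. Using the continuity equation $\partial_t\log\sigma+u\cdot\nabla\log\sigma+\diverg u=0$, I would combine it with $(2\mu+\lambda)\diverg u=F+P(\sigma)-P(1)+\tfrac12|M|^2$ to obtain a Lagrangian ODE for $\log\sigma$ along particle paths. A Gronwall argument, using $L^\infty_tL^2_x$ control of $F$ derived from standard Hoff-type estimates (which only require $\|\sigma\|_{C^\beta}$ and $\|M\|_{L^\infty}$ to be bounded, exactly the hypothesis available), then forces $\sigma\ge\underline\sigma(a,N_2)>0$.

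\textbf{Step 2: $H^2$ energy estimate.} With $\sigma$ now pinned between $\underline\sigma$ and $N_1$, the system is uniformly parabolic in $(u,M)$ and symmetrizable in $\sigma-1$. I would run the standard hierarchy: multiply the first equation by $P'(\sigma)/\sigma\cdot(\sigma-1)$, the second by $u$, the third by $M$, integrate and integrate by parts to produce the basic energy identity with dissipation $\|\nabla u\|^2+\|\nabla M\|^2$. For the first--order estimate apply $\nabla$ and test against $\nabla\sigma$, $\nabla u$, $\nabla M$, using $L^\infty$ control of $\sigma$ and integration by parts to absorb cubic terms. For the second-order estimate apply $\nabla^2$ and repeat; the nonlinear terms like $\nabla^2(\sigma u\otimes u)$ and $\nabla^2(\curl(u\times M))$ are controlled by Gagliardo--Nirenberg together with the uniform $\|M\|_{L^\infty}$ hypothesis. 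Summing and applying Gronwall yields (\ref{Th2}).

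\textbf{Step 3: decay rate.} Let $\rho=\sigma-1$ and linearize around $(1,0,0)$; the principal part is the standard compressible Navier-Stokes--Maxwell-type linear semigroup $e^{tL}$, whose Green's function decomposes into a parabolic piece and a wave piece with well--known $L^1\to L^2$ estimates giving $(1+t)^{-3/4}$ for $\|e^{tL}U_0\|_{L^2}$ when $U_0\in L^1\cap L^2$. The mild formulation reads
\begin{equation*}
U(t)=e^{tL}U_0+\int_0^t e^{(t-\tau)L}N(U)(\tau)\,d\tau,
\end{equation*}
where $N(U)$ collects all quadratic and higher nonlinear terms. Using the $H^2$ bound from Step 2 to ensure $\|N(U)\|_{L^1\cap L^2}$ is controlled by $\|\nabla(u,M)\|\cdot\|U\|_2$ (so integrable in time thanks to (\ref{Th2})), I would set $\mathcal{M}(t)=\sup_{0\le\tau\le t}(1+\tau)^{3/4}\|U(\tau)\|_1$ and close a bootstrap $\mathcal{M}(t)\le C_0+C\,\varepsilon\,\mathcal{M}(t)$ after splitting the time integral at $t/2$ and using interpolation with the uniformly bounded $H^2$ norm on the high--frequency side.

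\textbf{Main obstacle.} The genuinely hard point is Step 3: without a smallness assumption the nonlinearity $N(U)$ is not small, so the usual contraction bootstrap does not close directly. One must exploit the fact that (\ref{Th2}) provides an integrable-in-time dissipation $\int_0^\infty\|\nabla(u,M)\|^2d\tau<\infty$, and couple this with a careful low--high frequency decomposition of the linearized semigroup, so that the "bad" factor appearing in front of $\mathcal{M}(t)$ is actually $\int_{t/2}^t\|\nabla(u,M)\|\,d\tau$, which tends to zero. This time--decaying prefactor, rather than smallness of the data, is what ultimately allows the bootstrap to close and yields (\ref{Th3}).
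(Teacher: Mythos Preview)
The paper does not actually prove Theorem~1.1: it is stated there purely as a quotation of the main result of Chen--Huang--Xu~\cite{Chenyuhui}, introduced with the sentence ``Write Theorem~\ref{Theorem1.1} proved in \cite{Chenyuhui} because of the usage more than once in our study.'' So there is no in-paper proof to compare against; your proposal is really a sketch of the argument in~\cite{Chenyuhui}, not of anything in the present paper.

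That said, your three-step outline is broadly faithful to how~\cite{Chenyuhui} proceeds. The identification of the effective viscous flux (with the magnetic pressure $\tfrac12|M|^2$ included) and the Lagrangian ODE for $\log\sigma$ is the right mechanism for the lower bound, and your diagnosis of the real difficulty---closing the decay bootstrap in Step~3 without any smallness on the data---is exactly the point of that paper: the coefficient in front of $\mathcal{M}(t)$ is made small for large $t$ via the integrable dissipation $\int_0^\infty\|\nabla(u,M)\|^2\,d\tau$ from~(\ref{Th2}), not via small initial data.

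One place where your sketch is too optimistic is Step~2. Writing ``Summing and applying Gronwall yields~(\ref{Th2})'' hides the genuine work: for large data a naive Gronwall on the $H^2$ hierarchy produces an exponential factor $\exp\bigl(C\int_0^t\|\nabla u\|_{L^\infty}\,d\tau\bigr)$, and nothing in the hypotheses makes that integral finite a priori. In~\cite{Chenyuhui} the uniform-in-time $H^2$ bound is obtained by a more delicate bootstrap that first extracts time-decay of lower-order norms (via Fourier splitting / the $L^1$ assumption) and then feeds this decay back into the higher-order energy inequalities so that the Gronwall coefficients become integrable. Your Step~2 and Step~3 are therefore not sequential but intertwined, and you should flag this coupling rather than presenting~(\ref{Th2}) as a consequence of energy estimates alone.
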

\par
Now, let us give the statement of our conclusion:
\begin{Theorem}\label{Theorem1.2} On the basis of conditions about Theorem 1.1, for $\forall t\geq \widetilde{T}$, $(\sigma,u,M)$ about (\ref{1.1}) has time decay estimation
\begin{eqnarray}\label{Th1.2}
\|\nabla^k(\sigma-1,u,M)\|\leq C(1+t)^{-\frac{3}{4}-\frac{k}{2}},~~k=0,1,2.
 \end{eqnarray}
  Here, the large time $\widetilde{T}$ is going to be defined in Lemma \ref{Lemma2.7}.
 \end{Theorem}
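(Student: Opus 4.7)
The plan is to upgrade the decay rate $(1+t)^{-5/4}$ of $\|\nabla^2(\sigma-1,u,M)\|$ established in \cite{Womensa} to the optimal $(1+t)^{-7/4}$; the cases $k=0,1$ of (\ref{Th1.2}) are already contained in (\ref{Th3}) and (\ref{we}). Following the semigroup strategy of \cite{Wangwenjun}, I would rewrite (\ref{1.1}) in perturbation variables $\rho=\sigma-1$ as
$$\partial_t U + \mathcal{L}U = \mathcal{N}(U), \qquad U=(\rho,u,M),$$
where $\mathcal{L}$ is the CMHD linearization about $(1,0,0)$ and $\mathcal{N}(U)$ collects every genuinely quadratic term (arising from $\diverg(\rho u)$, the convective term $u\cdot\nabla u$, the factor $(1+\rho)^{-1}$ multiplying the viscous and Lorentz terms, and $\curl(u\times M)$). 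The linear semigroup $S(t)$ satisfies the spectral bound $\|\nabla^k S(t)f\|\leq C(1+t)^{-3/4-k/2}(\|f\|_{L^1}+\|\nabla^k f\|)$ for $k=0,1,2$, which is precisely the rate one wants to propagate through the nonlinearity.

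Starting Duhamel from the large time $\widetilde{T}$,
$$U(t)=S(t-\widetilde{T})U(\widetilde{T})+\int_{\widetilde{T}}^t S(t-\tau)\mathcal{N}(U(\tau))\,d\tau,$$
the homogeneous piece already decays like $(1+t)^{-7/4}$ because $U(\widetilde{T})\in L^1\cap H^2$ by hypothesis and by (\ref{Th2}). For the Duhamel integral I would split at $\tau=t/2$: on $[\widetilde{T},t/2]$ I pair the fast semigroup factor $(1+t-\tau)^{-7/4}$ with $\|\mathcal{N}(U(\tau))\|_{L^1}$, and on $[t/2,t]$ I pair a milder factor with $\|\nabla^2 \mathcal{N}(U(\tau))\|$. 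Schematically $\mathcal{N}(U)\sim U\cdot\nabla U+\rho\,\partial_t U$, so the lower-order decays (\ref{Th3}) and (\ref{we}), combined with Gagliardo–Nirenberg interpolation and the uniform $H^2$ bound (\ref{Th2}), give $\|\mathcal{N}(U)\|_{L^1\cap L^2}\lesssim (1+t)^{-3/2}$. After integration against the semigroup kernel this yields exactly the required $(1+t)^{-7/4}$ rate.

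The main obstacle is controlling $\|\nabla^2 \mathcal{N}(U)\|$ uniformly in time, since derivatives of $U$ at order three appear and these are not directly supplied by Theorem \ref{Theorem1.1} or by (\ref{we}). To overcome this I would establish an auxiliary Lyapunov-type inequality, schematically
$$\frac{d}{dt}\mathcal{E}_2(t)+c\,\|\nabla^2\rho\|^2+c\,\|\nabla^3(u,M)\|^2\leq C(1+t)^{-a}\,\mathcal{E}_2(t)+C(1+t)^{-b},$$
where $\mathcal{E}_2(t)\sim\|\nabla^2 U\|^2$ is augmented by a cross term of the form $\langle\nabla^2 u,\nabla\nabla^2\rho\rangle$ engineered to recover dissipation for $\nabla^2\rho$ (absent from the continuity equation). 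The uniform lower bound $\sigma\geq\underline{\sigma}$ from (\ref{Th1}) keeps all coefficients positive, while the magnetic nonlinearities $(\curl M)\times M$ and $\curl(u\times M)$ are absorbed through integration by parts together with the $L^\infty$ control of $M$ provided by the hypotheses of Theorem \ref{Theorem1.1}. Once this differential inequality is combined with the Duhamel estimate via Gronwall, and the bootstrap is closed by inserting the already-known $(1+t)^{-5/4}$ rate of \cite{Womensa} into the nonlinear bounds to improve it to $(1+t)^{-7/4}$, the theorem follows.
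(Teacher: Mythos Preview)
Your plan has a real gap at the step where you control the near-time Duhamel integral. The nonlinearity $\mathcal{N}(U)$ contains the quasilinear piece $\frac{\rho}{1+\rho}(\mu\Delta u+\eta\nabla\diverg u)$, so $\nabla^2\mathcal{N}(U)$ involves $\rho\,\nabla^2\Delta u$, i.e.\ \emph{fourth}-order derivatives of $u$, not merely third. In the $L^1\cap H^2$ framework of Theorem~\ref{Theorem1.1} these are not available even in a time-integrated sense, so the estimate $\|\nabla^2 S(t-\tau)\mathcal{N}\|\lesssim \|\nabla^2\mathcal{N}\|$ that you need on $[t/2,t]$ cannot be closed. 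The Lyapunov remedy you sketch does not repair this: the natural cross term at the $H^2$ level is $\int\nabla u\cdot\nabla^2\rho\,dx$ (your written choice $\langle\nabla^2 u,\nabla\nabla^2\rho\rangle$ already lies outside $H^2$), and with that cross term one only gets $\mathcal{E}_2(t)\sim\|\nabla(q,u,M)\|_{H^1}^2$, because $|\int\nabla u\cdot\nabla^2\rho|\le\|\nabla u\|\,\|\nabla^2\rho\|$ mixes first and second derivatives. Running Gronwall on such an $\mathcal{E}_2$ reproduces exactly the $(1+t)^{-5/4}$ rate of \cite{Womensa} and cannot be bootstrapped further.

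What the paper actually does---and this is the missing idea---is a \emph{frequency splitting of the cross term}. One subtracts from $E(t)$ the low/medium-frequency piece $\delta\int\nabla u\cdot\nabla^2 q^L\,dx$; the remaining high-frequency cross term obeys $|\int\nabla u\cdot\nabla^2 q^h\,dx|\le C\|\nabla^2 u\|\,\|\nabla q^h\|\le C\|\nabla^2 u\|\,\|\nabla^2 q\|$ by the Bernstein-type inequality of Lemma~\ref{LA.1}, so the modified functional is genuinely equivalent to $\|\nabla^2(q,u,M)\|^2$. The resulting differential inequality (Lemma~\ref{Lemma2.4}) closes with a source term $\|\nabla^2(q^L,u^L,M^L)\|^2$, and \emph{on the low/medium-frequency projection} two Fourier derivatives cost only a bounded multiplier, so Duhamel (Lemma~\ref{Lemma2.6}) requires only $\|F(X)\|_{L^1}$ and $\|F(X)\|_{L^2}$---never $\|\nabla^2 F(X)\|$. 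That is how the $H^2$ regularity ceiling is respected while still extracting the $(1+t)^{-7/4}$ rate.
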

And then, we present the main method and process. In the first step, based on estimations about the second-order derivative for space of $(q,u,M)$ and $\nabla ^2 q$  which were obtained in \cite{Womensa}, we set up the below energy estimation by classical energy method
\begin{eqnarray}\nonumber
\begin{aligned}
&\frac{d}{dt}E(t)+\frac{1}{2}\int_{R^3}\Big(\delta P'(1)|\nabla^2 q|^2+\eta|\nabla^2 \diverg u|^2+\mu|\nabla^3 u|^2+\nu|\nabla^3 M|^2 \Big)dx\\
&\leq C_{1}\delta \Big(\|\nabla ^2 u\|^2+\|\nabla^2 M\|^2\Big),
\end{aligned}
\end{eqnarray}
for a small constant $\delta>0$, a large enough time $T_{1}>0$ and a constant $C_{1}>0$ ( not about time), here
\begin{eqnarray}\nonumber
E(t):=\delta\int_{R^3}\nabla u\cdot\nabla^2 q dx+\frac{p'(1)}{2}\|\nabla^2 q\|^2 +\frac{1}{2}\|\nabla^2 u\|^2+\frac{1}{2}\|\nabla^2 M\|^2.
\end{eqnarray}
Obviously,  $ \int_{R^3}\nabla u\cdot\nabla^2 q dx$ is a difficult term to get time decay estimation. But, owing to the smallness of $\delta$, $E(t)\sim\|\nabla (q,u,M)\|_{1}^2$ , so, in \cite{Womensa}, we only obtain same time decay estimations of $\|\nabla(q,u,M)\|$ and $\|\nabla^2 (q,u,M)\|$. To overcome this difficulty, we adopt the method inspired by Wang with Wen \cite{Wangwenjun}. In \cite{Wangwenjun}, they used a method which has no decay loss of the highest-order derivative for space of solution to prove optimal time decay estimation for small-solution about compressible Navier-Stokes equations with reaction diffusion. Thus, in the second step, the detailed process in Lemma \ref{Lemma2.4}, we establish the estimation of $\int_{R^3}\nabla u\cdot\nabla^2 q^L dx$ ($q^L$ is low-medium frequency portion of $q$ which the details is in Appendix A ), and then take the elimination of it on $E(t)$, it can get
\begin{eqnarray}\nonumber
E(t)-\delta\int_{R^3}\nabla u \cdot \nabla^2 q^L dx\sim\|\nabla^2(q,u,M)\|^2,
\end{eqnarray}
further, for $T_{*}$ ( large enough time) and constant $C_{*}>0$ (not about time), one has
\begin{eqnarray}\nonumber
\|\nabla^2 (q,u,M)(t)\|^2\leq Ce^{-C_{*} t}\|\nabla^2(q,u,M)(T_{*})\|^2+C\int_{T_{*}}^t e^{-C_{*}(t-\tau)}\|\nabla^2(q^L,u^L,M^L)(\tau)\|^2 d\tau.
\end{eqnarray}
Finally, based on time decay estimation for solution about linearized equations, which is the analysis for low-middle-frequency portion (details in Appendix B), time decay estimation of $\|\nabla^2(q^L,u^L,M^L)(\tau)\|^2$ is derived, further, we set up that of $\|\nabla^2(q,u,M)\|^2$.
{\section{Process of proof}}
\par
 The process of proof about Theorem \ref{Theorem1.2} mainly includes four parts: energy estimations, elimination of $\int_{R^3}\nabla u\cdot\nabla^2 q^L dx$, $L_{x}^2 L_{t}^\infty$-norm-estimation for low-middle-frequency portion as well as time decay estimation about nonlinear equations.
Firstly, define $q :=\sigma-1$ and $\eta:=\mu+\lambda$ , then, rewrite (\ref{1.1}) and (\ref{1.2}), one has
\begin{eqnarray}\label{2.1}
 \begin{cases}
   \partial_{t}q+\diverg u=\mathfrak{a},\\
   \partial_{t}u+P'(1)\nabla q-\eta\nabla \diverg u-\mu\Delta u=\mathfrak{b},\\
   \partial_{t}M-\nu\Delta M=\mathfrak{c},
 \end{cases}
\end{eqnarray}
\begin{eqnarray}\label{2.2}
(q,u,M)(x,0)=(q_{0},u_{0},M_{0})(x)\rightarrow (0,0,0),\quad as~|x|\rightarrow +\infty,
\end{eqnarray}
where $\mathfrak{a}$, $\mathfrak{b}$, $\mathfrak{c}$ are defined as
\begin{eqnarray}\label{2.3}
\begin{cases}
\mathfrak{a}:=- u\cdot\nabla q -q \diverg u,\\
\mathfrak{b}:=\frac{1}{q+1}(M\cdot\nabla M+M\cdot \nabla^{t}M)- \frac{q}{q+1}(\mu\Delta u+\eta\nabla \diverg u)-\Big(\frac{P'(1+q)}{1+q}-P'(1)\Big)\nabla q-u\cdot \nabla u,\\
\mathfrak{c}:=-u\cdot \nabla M-(\diverg u)M+M\cdot \nabla u.\\
\end{cases}
\end{eqnarray}
{\subsection {Energy estimations}}
\par
We thanks to the following Lemma \ref{Lemma2.1} and Lemma \ref{Lemma2.2} obtained in \cite{Womensa}, they are estimations about the second-order derivative for space of $(q,u,M)$ as well as $\nabla ^2 q$ respectively, based on this two Lemma, Lemma \ref{Lemma2.3} can be obtained.
\begin{Lemma}\label{Lemma2.1} On the basis of conditions to Theorem \ref{Theorem1.1},  estimation of the second-order derivative for space of solution is as follows
\begin{eqnarray}\label{L2.2}
\begin{aligned}
& \frac{d}{dt}\int_{R^3}\Big(\frac{p'(1)}{2}|\nabla^2 q|^2 +\frac{1}{2}|\nabla^2 u|^2+\frac{1}{2}|\nabla^2 M|^2 \Big)dx+\int_{R^3}\Big(\eta|\nabla^2\diverg u|^2 +\mu|\nabla^3 u|^2+\nu|\nabla^3 M|^2 \Big)dx\\
&\leq C\Big(\|(u,M)\|_{1}+\|(q,u,M,\nabla u)\|^\frac{1}{4}\Big)\Big(\|\nabla^2 q\|^2+\|\nabla^2 (u,M)\|_{1}^2\Big).
\end{aligned}
\end{eqnarray}
\end{Lemma}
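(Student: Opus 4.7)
The strategy is the higher-order energy method applied directly to the reformulation (\ref{2.1}). I would apply $\nabla^2$ to each of the three equations, then take the $L^2$ inner products with $P'(1)\nabla^2 q$, $\nabla^2 u$, and $\nabla^2 M$ respectively, integrate over $R^3$, and sum. A single integration by parts makes the linear coupling between the pressure term $P'(1)\nabla q$ in the momentum equation and $\diverg u$ in the continuity equation cancel, so the linear part of the left-hand side collapses to exactly the time-derivative and the dissipation appearing in (\ref{L2.2}). The entire problem therefore reduces to estimating the nonlinear forcings $\nabla^2\mathfrak{a}$, $\nabla^2\mathfrak{b}$, $\nabla^2\mathfrak{c}$ paired against $\nabla^2(q,u,M)$.

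Next I would expand these forcings with the Leibniz rule. Theorem \ref{Theorem1.1} gives $\sigma=q+1\in[\underline{\sigma},N_1]$, so the compositions $\frac{1}{q+1}$, $\frac{P'(1+q)}{1+q}-P'(1)$ and their derivatives are controlled by polynomials in $q$ and $\nabla q$. Each resulting multilinear term is split by H\"older's inequality so that the highest-order factor sits in $L^2$ and the remaining factors sit in $L^3$, $L^6$, or $L^\infty$. Sobolev embedding $H^1\hookrightarrow L^6$ on the low-order factors produces the prefactor $\|(u,M)\|_1$, whereas for the multilinear pieces in which the low-order factor must be placed in $L^\infty$ I would use the three-dimensional Gagliardo--Nirenberg inequality
\begin{equation*}
\|f\|_{L^\infty(R^3)}\leq C\|f\|_{L^2}^{1/4}\|\nabla^2 f\|_{L^2}^{3/4},
\end{equation*}
which explains the exponent $1/4$ in the prefactor $\|(q,u,M,\nabla u)\|^{1/4}$. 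The leftover $\|\nabla^2 f\|_{L^2}^{3/4}$ factors, together with the top-order $\nabla^3$ factors, are absorbed into the viscous dissipation $\mu\|\nabla^3 u\|^2+\nu\|\nabla^3 M\|^2+\eta\|\nabla^2\diverg u\|^2$ via Young's inequality.

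The main obstacle is the bookkeeping needed to land on exactly the claimed prefactor rather than a larger $H^2$-type quantity. One has to choose the H\"older splitting in each multilinear piece so that the surviving low-order norms collapse into either $\|(u,M)\|_1$ or $\|(q,u,M,\nabla u)\|^{1/4}$; the most delicate terms are those in $\nabla^2\mathfrak{b}$ containing three derivatives on $u$ or $M$, where an additional integration by parts is needed to move a derivative onto the lowest-regularity factor, and the quadratic magnetic coupling $\frac{1}{q+1}(M\cdot\nabla M+M\cdot\nabla^t M)$, for which $\diverg M=0$ is used to keep the estimate bilinear rather than trilinear. Once these estimates are combined, (\ref{L2.2}) follows.
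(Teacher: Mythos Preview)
Your plan is essentially correct and is the standard $H^2$ energy method for this system. Note, however, that the present paper does not actually prove Lemma~\ref{Lemma2.1}: it quotes both Lemma~\ref{Lemma2.1} and Lemma~\ref{Lemma2.2} from the authors' companion paper \cite{Womensa} (``obtained in \cite{Womensa}''), so there is no in-paper proof to compare with. The indirect evidence in the paper---the estimate of $\|\nabla\mathfrak{b}\|$ imported as (\ref{L2.5.4}) and the computation of $\|\nabla\mathfrak{a}\|$ in (\ref{L2.5.3}), both using exactly the Gagliardo--Nirenberg interpolation you describe---shows that \cite{Womensa} follows precisely the route you outline: differentiate, pair, cancel the pressure/divergence coupling, and estimate the nonlinearities by H\"older $+$ G--N with the $L^\infty$ interpolation $\|f\|_{L^\infty}\le C\|f\|^{1/4}\|\nabla^2 f\|^{3/4}$.

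Two small clarifications on your write-up. First, in the statement (\ref{L2.2}) nothing is ``absorbed into the viscous dissipation'': the right-hand side already contains $\|\nabla^3 u\|^2+\|\nabla^3 M\|^2$ inside $\|\nabla^2(u,M)\|_1^2$, and the absorption you describe only happens later in Lemma~\ref{Lemma2.3}, once the prefactor is made small via (\ref{Th3}). So your job here is just to land the nonlinear terms inside $(\text{prefactor})\cdot(\|\nabla^2 q\|^2+\|\nabla^2(u,M)\|_1^2)$, not to close the estimate. Second, the reason $\nabla u$ (rather than just $u$) appears in the $1/4$-power prefactor is exactly the transport piece $\int u\cdot\nabla\nabla^2 q\cdot\nabla^2 q=-\tfrac12\int\diverg u\,|\nabla^2 q|^2$: since there is no dissipation on $q$, one is forced to estimate $\|\diverg u\|_{L^\infty}\le C\|\nabla u\|^{1/4}\|\nabla^3 u\|^{3/4}$ and then use Young together with the uniform bound $\|\nabla^2 q\|\le C_0$ from (\ref{Th2}) to get $\|\nabla u\|^{1/4}(\|\nabla^3 u\|^2+\|\nabla^2 q\|^2)$. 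This is the ``delicate'' step, not the magnetic term; $\diverg M=0$ plays no special role in keeping the estimate bilinear here.
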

\begin{Lemma}\label{Lemma2.2} On the basis of conditions to Theorem \ref{Theorem1.1}, estimation of $\nabla ^2 q$ is  as follows
\begin{eqnarray}\label{L2.3}
\begin{aligned}
&\frac{d}{dt}\int_{R^3}\nabla u\cdot\nabla^2 q dx+\int_{R^3}\frac{7p'(1)}{8}|\nabla^2 q|^2 dx\\
&\leq C\|\nabla^2 u\|_{1}^2 + C\Big(\|(q,u,M)\|_{1}+\|(q,u,M)\|^\frac{1}{4}\Big)\Big(\|\nabla^2q\|^2+\|\nabla^2 M\|_{1}^2\Big).
\end{aligned}
\end{eqnarray}
\end{Lemma}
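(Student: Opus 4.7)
The plan is to derive the claimed differential inequality by differentiating the mixed quantity $\int_{R^3}\nabla u\cdot\nabla^2 q\,dx$ in time and using the equations (\ref{2.1}) to extract the damping $-P'(1)\|\nabla^2 q\|^2$. This is the classical device for recovering coercivity in the density variable in compressible Navier--Stokes type systems: the pressure gradient in the momentum equation, once paired with $\nabla^2 q$, produces a dissipation for $q$ that substitutes for the absent parabolic smoothing of the density.

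Concretely, I would first write
\begin{equation*}
\frac{d}{dt}\int_{R^3}\nabla u\cdot\nabla^2 q\,dx=\int\nabla\partial_t u\cdot\nabla^2 q\,dx+\int\nabla u\cdot\nabla^2\partial_t q\,dx,
\end{equation*}
and substitute $\nabla\partial_t u=-P'(1)\nabla^2 q+\eta\nabla^2\diverg u+\mu\nabla\Delta u+\nabla\mathfrak{b}$ from $(\ref{2.1})_2$ and $\nabla^2\partial_t q=-\nabla^2\diverg u+\nabla^2\mathfrak{a}$ from $(\ref{2.1})_1$, reading off the principal term $-P'(1)\|\nabla^2 q\|^2$. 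The two linear cross terms $\eta\int\nabla^2\diverg u\cdot\nabla^2 q$ and $\mu\int\nabla\Delta u\cdot\nabla^2 q$ are controlled by Young's inequality with coefficient $P'(1)/16$ on the $\|\nabla^2 q\|^2$-side, producing $C\|\nabla^3 u\|^2$ as remainder; the remaining linear piece $-\int\nabla u\cdot\nabla^2\diverg u$ reduces to $\|\nabla\diverg u\|^2\le\|\nabla^2 u\|^2$ after one integration by parts. Together these absorb at most $P'(1)/8\cdot\|\nabla^2 q\|^2$, leaving the stated coefficient $\frac{7P'(1)}{8}$ on the left and $C\|\nabla^2 u\|_1^2$ on the right.

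The nonlinear terms $\int\nabla u\cdot\nabla^2\mathfrak{a}$ and $\int\nabla\mathfrak{b}\cdot\nabla^2 q$ come next. I would integrate the first by parts to $\int\Delta u\cdot\nabla\mathfrak{a}$, so only $\|\nabla\mathfrak{a}\|_{L^2}$ is needed; expanding $\nabla\mathfrak{a}=-\nabla u\cdot\nabla q-u\cdot\nabla^2 q-\nabla q\,\diverg u-q\,\nabla\diverg u$, each product is estimated by H\"older together with 3D Gagliardo--Nirenberg inequalities such as $\|f\|_{L^\infty}\le C\|f\|^{1/4}\|f\|_{H^2}^{3/4}$ and $\|g\|_{L^3}\le C\|g\|^{1/2}\|\nabla g\|^{1/2}$, with the uniform $H^2$ bound (\ref{Th2}) used to trade surplus top-order norms for constants. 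Each resulting schematic product $\|\nabla^2 u\|\cdot\|\nabla^2 q\|\cdot(\text{small})$ is then split via $ab\le(a^2+b^2)/2$ so that the $\|\nabla^2 u\|^2$-part feeds into $C\|\nabla^2 u\|_1^2$ while the $\|\nabla^2 q\|^2$-part retains the small prefactor. The second term is treated analogously by decomposing $\mathfrak{b}$ into its magnetic, density-weighted viscous, pressure and convective pieces, taking one gradient on each and applying the same H\"older/Gagliardo--Nirenberg toolkit; the magnetic piece $\nabla[M\cdot\nabla M/(q+1)]$ is precisely what generates the companion small-coefficient terms in $\|\nabla^2 M\|_1^2$.

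The main obstacle will be the delicate bookkeeping of interpolation exponents for the pressure and magnetic nonlinearities, where the natural H\"older estimate yields surplus fractional norms that must be matched against $\|(q,u,M)\|_1$ or $\|(q,u,M)\|^{1/4}$. For instance, writing the pressure piece of $\mathfrak{b}$ as $-h(q)\nabla q$ with $h(q)=P'(1+q)/(1+q)-P'(1)$, its gradient contributes $h'(q)|\nabla q|^2+h(q)\nabla^2 q$, and pairing the first against $\nabla^2 q$ gives $\|\nabla q\|_{L^4}^2\|\nabla^2 q\|\le C\|\nabla q\|^{1/2}\|\nabla^2 q\|^{5/2}$. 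To fit this into the desired prefactor $\|(q,u,M)\|_1+\|(q,u,M)\|^{1/4}$, one must use the $H^2$ bound (\ref{Th2}) to exchange the surplus $\|\nabla^2 q\|^{1/2}$ for a constant and then rearrange carefully by Young's inequality; the same kind of tight interpolation is required for the magnetic cubic $\nabla M\cdot\nabla M$ and for $u\cdot\nabla u$ in the convective term, ensuring that no coefficient-$1$ residue on $\|\nabla^2 q\|^2$ or $\|\nabla^2 M\|_1^2$ survives.
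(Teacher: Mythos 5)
Your proposal is correct and follows essentially the same route as the source: the paper itself imports Lemma \ref{Lemma2.2} from \cite{Womensa} without reproving it, but the identical computation appears in its proof of Lemma \ref{Lemma2.4} --- differentiate $\int_{R^3}\nabla u\cdot\nabla^2 q\,dx$, substitute from (\ref{2.1}), integrate by parts to reduce the $\partial_t q$-contribution to $\|\nabla \diverg u\|^2$, absorb the two viscous cross terms by Young's inequality (which is exactly what leaves the coefficient $\frac{7P'(1)}{8}$), and bound $\|\nabla\mathfrak{a}\|$ and $\|\nabla\mathfrak{b}\|$ precisely as in (\ref{L2.5.3})--(\ref{L2.5.4}) via H\"older, Gagliardo--Nirenberg and the uniform bound (\ref{Th2}). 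Your interpolation bookkeeping for the pressure and magnetic nonlinearities (e.g.\ trading the surplus $\|\nabla^2 q\|^{1/2}$ for a constant through (\ref{Th2}) so that $\|\nabla q\|^{1/2}\leq C\|q\|^{\frac{1}{4}}$) correctly reproduces the stated prefactor $\|(q,u,M)\|_{1}+\|(q,u,M)\|^{\frac{1}{4}}$, so no gap remains.
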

\par
Next, Lemma \ref{Lemma2.1} and Lemma \ref{Lemma2.2} contribute to Lemma \ref{Lemma2.3}.
\begin{Lemma}\label{Lemma2.3} On the foundation of conditions to Theorem \ref{Theorem1.1}, define $E(t)$
\begin{eqnarray}\label{L2.4.1}
E(t):=\delta\int_{R^3}\nabla u\cdot\nabla^2 q dx+\frac{p'(1)}{2}\|\nabla^2 q\|^2 +\frac{1}{2}\|\nabla^2 u\|^2+\frac{1}{2}\|\nabla^2 M\|^2,
\end{eqnarray}
 for a small constant $\delta>0$, a large enough time $T_{1}>0$ and a constant $C_{1}>0$ (not about time), it establishes
\begin{eqnarray}\label{L2.4.2}
\begin{aligned}
&\frac{d}{dt}E(t)+\frac{1}{2}\int_{R^3}\Big(\delta P'(1)|\nabla^2 q|^2+\eta|\nabla^2 \diverg u|^2+\mu|\nabla^3 u|^2+\nu|\nabla^3 M|^2 \Big)dx\\
&\leq C_{1}\delta \Big(\|\nabla ^2 u\|^2+\|\nabla^2 M\|^2\Big).
\end{aligned}
\end{eqnarray}
\end{Lemma}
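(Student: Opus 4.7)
The target estimate is a weighted linear combination of Lemmas~\ref{Lemma2.1} and~\ref{Lemma2.2}. I would multiply the inequality of Lemma~\ref{Lemma2.2} by a small parameter $\delta>0$ and add it to Lemma~\ref{Lemma2.1}. By construction the time-derivative terms assemble into $\frac{d}{dt}E(t)$, and the dissipative side of the combined inequality acquires the full collection $\eta\|\nabla^2\diverg u\|^2+\mu\|\nabla^3 u\|^2+\nu\|\nabla^3 M\|^2+\delta\,\frac{7p'(1)}{8}\|\nabla^2 q\|^2$. What remains is to show that, for $t$ larger than some threshold $T_1$, the right-hand sides of both lemmas can be absorbed into this dissipation except for a residual of the form $C_1\delta(\|\nabla^2 u\|^2+\|\nabla^2 M\|^2)$.

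\textbf{Exploiting the decay from Theorem~\ref{Theorem1.1}.} I would first expand $\|\nabla^2(u,M)\|_1^2=\|\nabla^2 u\|^2+\|\nabla^2 M\|^2+\|\nabla^3 u\|^2+\|\nabla^3 M\|^2$, together with the analogous splittings of $\|\nabla^2 M\|_1^2$ and $\|\nabla^2 u\|_1^2$, so that every norm on the right-hand side is at one of the two levels already visible on the dissipative side. Then I would invoke (\ref{Th3}): the prefactors $\|(u,M)\|_1+\|(q,u,M,\nabla u)\|^{1/4}$ coming from Lemma~\ref{Lemma2.1} and $\|(q,u,M)\|_1+\|(q,u,M)\|^{1/4}$ from Lemma~\ref{Lemma2.2} both tend to zero, so I can fix $T_1$ large enough that both are bounded by a prescribed $\varepsilon>0$ for every $t\ge T_1$.

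\textbf{Parameter hierarchy.} The parameters must be chosen in the right order. First fix $\delta$ so small that $C\delta\le\mu/4$ and $C\delta\le\nu/4$, which is needed to absorb the unweighted $\|\nabla^2 u\|_1^2$ piece produced by Lemma~\ref{Lemma2.2}. Next pick $\varepsilon\le\delta$ small enough that $\varepsilon(1+C\delta)\le\delta\,\frac{3p'(1)}{8}$ and $\varepsilon\le\min\bigl\{\mu/4,\nu/4\bigr\}$. With these choices, the $\|\nabla^3 u\|^2$ and $\|\nabla^3 M\|^2$ contributions on the right are dominated by $\tfrac12\mu\|\nabla^3 u\|^2$ and $\tfrac12\nu\|\nabla^3 M\|^2$; the $\|\nabla^2 q\|^2$ contributions are dominated by $\delta\bigl(\tfrac{7p'(1)}{8}-\tfrac{P'(1)}{2}\bigr)\|\nabla^2 q\|^2$, so the coercive constant $\tfrac12\delta P'(1)$ of (\ref{L2.4.2}) survives; and the $\eta\|\nabla^2\diverg u\|^2$ term on the left is never touched. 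Finally, taking $T_1$ large enough that both decay prefactors fall below $\varepsilon$ produces (\ref{L2.4.2}).

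\textbf{Main obstacle.} The only genuine difficulty is that neither Lemma~\ref{Lemma2.1} nor Lemma~\ref{Lemma2.2} supplies coercive control of $\|\nabla^2 u\|^2$ or $\|\nabla^2 M\|^2$ on its dissipative side (only the highest-order $\|\nabla^3 u\|^2$, $\|\nabla^3 M\|^2$ are available there). Consequently the residual $(\varepsilon+C\delta)(\|\nabla^2 u\|^2+\|\nabla^2 M\|^2)$ cannot be absorbed and must remain on the right, and with $\varepsilon\le\delta$ it becomes the $C_1\delta(\|\nabla^2 u\|^2+\|\nabla^2 M\|^2)$ in (\ref{L2.4.2}). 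This unclosed piece is precisely what the subsequent low--medium frequency decomposition is engineered to handle, which is why the next step of the paper must introduce $q^L$ and the elimination of $\int_{R^3}\nabla u\cdot\nabla^2 q^L\,dx$.
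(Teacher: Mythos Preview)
Your proposal is correct and follows essentially the same route as the paper: add $\delta\times(\ref{L2.3})$ to $(\ref{L2.2})$, use the decay $(\ref{Th3})$ to make the prefactors $\|(q,u,M)\|_1+\|(q,u,M,\nabla u)\|^{1/4}$ small for $t\ge T_1$, and absorb everything except the residual $C_1\delta(\|\nabla^2 u\|^2+\|\nabla^2 M\|^2)$. The paper carries this out a bit more tersely (it does not articulate the $\delta$--$\varepsilon$--$T_1$ hierarchy explicitly, but simply writes the combined inequality $(\ref{L2.4.3})$ and then chooses $T_1$ so that the prefactor falls below $\tfrac14\min\{\delta P'(1),\mu,\nu,4\widetilde{C}\delta\}$); your more detailed parameter bookkeeping is fine and leads to the same conclusion.
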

\begin{proof} Choosing a small constant $\delta>0$, then, adding up (\ref{L2.2}) with $\delta\times(\ref{L2.3})$ and using  (\ref{Th2}), it may check
\begin{eqnarray}\label{L2.4.3}
\begin{aligned}
&\frac{d}{dt}\int_{R^3}\Big(\delta\nabla u\cdot\nabla^2 q +\frac{p'(1)}{2}|\nabla^2 q|^2 +\frac{1}{2}|\nabla^2 u|^2+\frac{1}{2}|\nabla^2 M|^2\Big)dx\\
&+\frac{3}{4}\int_{R^3}\Big(\delta P'(1)|\nabla^2 q|^2+\eta|\nabla^2 \diverg u|^2+\mu|\nabla^3 u|^2+\nu|\nabla^3 M|^2 \Big)dx\\
&\leq C \Big(\|(q,u,M)\|_{1}+\|(q,u,M,\nabla u)\|^\frac{1}{4}\Big)\Big(\|\nabla^2 q\|^2+\|\nabla^3 u\|^2+\|\nabla^3 M\|^2 \Big)\\
&+C \Big(\|(q,u,M)\|_{1}+\|(q,u,M,\nabla u)\|^\frac{1}{4}\Big)\Big(\|\nabla^2 u\|^2+\|\nabla^2 M\|^2 \Big)\\
&+\widetilde{C}\delta\Big(\|\nabla^2 u\|^2+\|\nabla^2 M\|^2\Big).
\end{aligned}
\end{eqnarray}
Using (\ref{Th3}), it easily check
\begin{eqnarray}\nonumber
\begin{aligned}
\|(q,u,M)\|_{1}+\|(q,u,M,\nabla u)\|^\frac{1}{4}\leq C(1+t)^{-\frac{3}{16}},
\end{aligned}
\end{eqnarray}
so, for a large enough time $T_{1}>0$
\begin{eqnarray}\label{L2.4.4}
\begin{aligned}
C\Big(\|(q,u,M)\|_{1}+\|(q,u,M,\nabla u)\|^\frac{1}{4}\Big)\leq C(1+t)^{-\frac{3}{16}}\leq \frac{1}{4}\min\{\delta P'(1), \mu, \nu,4\widetilde{C}\delta\}.
\end{aligned}
\end{eqnarray}
Thus, (\ref{L2.4.2}) can be proved by combining (\ref{L2.4.3}) and (\ref{L2.4.4})
\begin{eqnarray}\nonumber
\begin{aligned}
&\frac{d}{dt}E(t)+\frac{1}{2}\int_{R^3}\Big(\delta P'(1)|\nabla^2 q|^2+\eta|\nabla^2 \diverg u|^2+\mu|\nabla^3 u|^2+\nu|\nabla^3 M|^2 \Big)dx\\
&\leq C_{1}\delta \Big(\|\nabla ^2 u\|^2+\|\nabla^2 M\|^2 \Big).
\end{aligned}
\end{eqnarray}
\end{proof}
{\subsection {Elimination of $\int_{R^3}\nabla u\cdot\nabla^2 q^L dx$}}
\par
We aim to demonstrate $L_{x}^2 L_{t}^\infty$-norm-estimation of $\nabla^2(q,u,M)$ by the elimination of $\int_{R^3}\nabla u\cdot\nabla^2 q^L dx$ on $E(t)$ (\ref{L2.4.1}).
\begin{Lemma}\label{Lemma2.4} It holds that for $T_{*}$ ( large enough time) 
 and constant $C_{*}>0$ (not about time)
\begin{eqnarray}\nonumber
\|\nabla^2 (q,u,M)(t)\|^2\leq Ce^{-C_{*} t}\|\nabla^2(q,u,M)(T_{*})\|^2+C\int_{T_{*}}^t e^{-C_{*}(t-\tau)}\|\nabla^2(q^L,u^L,M^L)(\tau)\|^2 d\tau.
\end{eqnarray}
\end{Lemma}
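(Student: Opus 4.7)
The plan is to build a modified energy $\widetilde{E}(t) := E(t) - \delta\int_{R^3}\nabla u\cdot\nabla^2 q^L\,dx$, where $q^L$ denotes the low–middle-frequency projection of $q$ (Appendix A), and to establish the two quantitative properties $\widetilde{E}(t)\sim\|\nabla^2(q,u,M)\|^2$ and $\frac{d}{dt}\widetilde{E}(t)+C_{*}\widetilde{E}(t)\le C\|\nabla^2(q^L,u^L,M^L)\|^2$ for $t\ge T_{*}$. A direct Grönwall integration then yields the claimed bound. The equivalence uses that on the high-frequency part one has $\|\nabla q^H\|\le C\|\nabla^2 q^H\|\le C\|\nabla^2 q\|$, so the remaining cross term $\delta\int\nabla u\cdot\nabla^2 q^H dx$ is controlled by $\tfrac{\delta}{2}\|\nabla^2 u\|^2+C\delta\|\nabla^2 q\|^2$; choosing $\delta$ small enough absorbs it into the other positive terms of $\widetilde{E}$, which is why the gain here (as opposed to $E(t)\sim\|\nabla(q,u,M)\|_{1}^2$) produces the stronger equivalence with $\|\nabla^2(q,u,M)\|^2$.

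For the differential inequality, I would start from Lemma \ref{Lemma2.3} and append the extra contribution $-\delta\,\frac{d}{dt}\int_{R^3}\nabla u\cdot\nabla^2 q^L dx$. Substituting $\partial_{t}u$ and $\partial_{t}q^L$ from (\ref{2.1}), integrating by parts and invoking the frequency localization splits this extra contribution into a ``good'' piece entirely controlled by $\|\nabla^2(q^L,u^L,M^L)\|^2$ (the low-frequency linear forcing coming from $\nabla q^L$, $\nabla\diverg u^L$, $\Delta u^L$), and a ``bad'' piece supported at high frequencies which is absorbed into the dissipation $\|\nabla^2 q\|^2+\|\nabla^3 u\|^2+\|\nabla^3 M\|^2$ via the Bernstein inequality $\|\nabla^2 u^H\|\le C\|\nabla^3 u^H\|$. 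The same high/low splitting is applied to the right-hand side $C_{1}\delta(\|\nabla^2 u\|^2+\|\nabla^2 M\|^2)$ of (\ref{L2.4.2}): the high-frequency parts are absorbed into dissipation, while the low-frequency parts are relegated to the forcing $\|\nabla^2(u^L,M^L)\|^2$.

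The main obstacle will be controlling the nonlinear contributions $\mathfrak{a},\mathfrak{b},\mathfrak{c}$ that appear once $\partial_{t}u$ and $\partial_{t}q^L$ are inserted into the cross-term derivative. These terms carry no intrinsic smallness, so one has to combine Sobolev embeddings with the uniform bound (\ref{Th2}) and the decay rate (\ref{Th3}) to push their prefactors below $(1+t)^{-\varepsilon}$ for some $\varepsilon>0$; choosing $T_{*}\ge T_{1}$ large enough then forces those coefficients under $\delta$ times the available dissipation, exactly by the mechanism already used in (\ref{L2.4.4}). A secondary technical point is to verify that no genuinely third-order norm of $q$ survives: any $\nabla^3 q$ type expression generated during the manipulation must be integrated by parts back onto $u$ (whose third derivatives are dissipative) so that only $\|\nabla^2 q\|^2$ remains on the absorbed side before Grönwall is applied.
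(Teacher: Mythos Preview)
Your proposal is correct and follows essentially the same route as the paper: define $\widetilde{E}(t)=E(t)-\delta\int\nabla u\cdot\nabla^2 q^L\,dx$, establish the equivalence $\widetilde{E}\sim\|\nabla^2(q,u,M)\|^2$ via the Bernstein bound on the residual high-frequency cross term (note that $q-q^L=q^h$, not $q^H$, in the paper's notation), compute $\frac{d}{dt}\int\nabla u\cdot\nabla^2 q^L\,dx$ by inserting $(\ref{2.1})$, absorb the high-frequency and nonlinear pieces into dissipation using Lemma~\ref{LA.1} and the decay $(\ref{Th3})$ for $t\ge T_*$, and finish with Gr\"onwall. The paper carries out exactly these steps, including the trick of extracting $\|\nabla^2 u^h\|^2,\|\nabla^2 M^h\|^2$ from the third-order dissipation and then adding $\|\nabla^2 u^L\|^2,\|\nabla^2 M^L\|^2$ to both sides so that the $C_1\delta(\|\nabla^2 u\|^2+\|\nabla^2 M\|^2)$ term is handled.
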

\begin{proof} Applying $\nabla(\ref{2.1})_{2}$ by $\nabla^2 q^L$, integrating over $R^3$ and using $(\ref{2.1})_{1}$, one has
\begin{eqnarray}\label{L2.5.1}
\begin{aligned}
\frac{d}{dt}\int_{R^3}\nabla u \cdot \nabla^2 q^L dx&=-P'(1)\int_{R^3}\nabla^2 q\cdot\nabla^2 q^Ldx+\eta\int_{R^3}\nabla^2 \diverg u\cdot\nabla^2 q^L dx+\mu\int_{R^3}\nabla\Delta u\cdot\nabla^2 q^L dx\\&+\int_{R^3}\nabla \mathfrak{b}\cdot\nabla^2 q^Ldx+\int_{R^3}\nabla \diverg u^L\cdot\nabla \diverg u dx-\int_{R^3}\nabla \mathfrak{a}^L \cdot\nabla \diverg u dx.
\end{aligned}
\end{eqnarray}
It's obviously to obtain $(\ref{L2.5.2})$ by H\"{o}lder and Young inequalities
\begin{eqnarray}\label{L2.5.2}
\begin{aligned}
-\frac{d}{dt}\int_{R^3}\nabla u \cdot \nabla^2 q^L dx&\leq
\frac{P'(1)}{4}\|\nabla^2 q\|^2+\frac{\eta}{2}\|\nabla^2 \diverg u\|^2+\frac{\mu}{2}\|\nabla^3 u\|^2+\frac{1}{2}\|\nabla \mathfrak{b}\|^2\\&+\Big(P'(1)+\frac{\mu+\eta+1}{2}\Big)\|\nabla^2 q^L\|^2+\frac{1}{2}\|\nabla \diverg u^L\|^2+\|\nabla \diverg u\|^2+\frac{1}{2}\|\nabla \mathfrak{a}^L\|^2.
\end{aligned}
\end{eqnarray}
According to $(\ref{2.3})_{1}$, by H\"{o}lder and G-N (Gagliardo-Nirenberg) inequalities as well as (\ref{Th2}), we can check
\begin{eqnarray}\label{L2.5.3}
\begin{aligned}
\|\nabla \mathfrak{a}\|&\leq C \|\nabla^2 q\|\|u\|_{L^\infty}+\|\nabla q\|_{L^3}\|\nabla u\|_{L^6}+ \|\nabla q\|_{L^3}\|\diverg u\|_{L^6}+ \| q\|_{L^\infty}\|\nabla\diverg u\| \\
&\leq C(\|u\|_{L^\infty}+\|\nabla q\|_{L^3}+ \| q\|_{L^\infty})(\|\nabla^2 q\|+\|\nabla^2 u\|)\\
&\leq C\Big(\|u\|^{\frac{1}{4}}\|\nabla^2 u\|^{\frac{3}{4}}+\|q\|^{\frac{1}{4}}\|\nabla^2 q\|^{\frac{3}{4}}\Big)(\|\nabla^2 q\|+\|\nabla^2 u\|)\\
&\leq C\|(q,u)\|^{\frac{1}{4}}\|\nabla^2(q,u)\|.
\end{aligned}
\end{eqnarray}
Thanks to $(2.14)$ and $(2.15)$ in \cite{Womensa}, it follows
\begin{eqnarray}\label{L2.5.4}
\begin{aligned}
\|\nabla \mathfrak{b}\|\leq C\Big(\|(u,M)\|_{1}+\|(q,u,M)\|^\frac{1}{4}\Big)(\|\nabla^2 q\|+\|\nabla^2 (u,M)\|_{1}).
\end{aligned}
\end{eqnarray}
By (\ref{A.5}) and Lemma \ref{LA.1}, it gets
\begin{eqnarray}\label{L2.5.5}
\begin{aligned}
\|\nabla \mathfrak{a}^L\|\leq \|\nabla \mathfrak{a}\|+\|\nabla \mathfrak{a}^h\|\leq C \|\nabla \mathfrak{a}\|.
\end{aligned}
\end{eqnarray}
Combining (\ref{L2.5.2})-(\ref{L2.5.5}), we obtain
\begin{eqnarray}\label{L2.5.6}
\begin{aligned}
-\frac{d}{dt}\int_{R^3}\nabla u \cdot \nabla^2 q^L dx&\leq
\frac{P'(1)}{4}\|\nabla^2 q\|^2+\frac{\eta}{2}\|\nabla^2 \diverg u\|^2+\frac{\mu}{2}\|\nabla^3 u\|^2+C\|\nabla^2 q^L\|^2\\&+\frac{1}{2}\|\nabla \diverg u^L\|^2+\|\nabla \diverg u\|^2\\&+C\Big(\|(u,M)\|_{1}^2+\|(q,u,M)\|^\frac{1}{2}\Big)\Big(\|\nabla^2 q\|^2+\|\nabla^2 (u,M)\|_{1}^2 \Big).
\end{aligned}
\end{eqnarray}
Similar (\ref{L2.4.4}), using (\ref{Th3}), for a large enough time $T_{2}>0$
\begin{eqnarray}\label{L2.5.66}
\begin{aligned}
C\Big(\|(u,M)\|_{1}^2+\|(q,u,M)\|^\frac{1}{2}\Big)\leq \frac{P'(1)}{8}.
\end{aligned}
\end{eqnarray}
Summing $\delta\times(\ref{L2.5.6})$ with (\ref{L2.4.2}), using (\ref{L2.5.66}) and then checking that for $T_{*}=\max\{ T_{1},T_{2}\}$
\begin{eqnarray}\label{L2.5.7}
\begin{aligned}
&\frac{d}{dt}\Big(E(t)-\delta\int_{R^3}\nabla u \cdot \nabla^2 q^L dx\Big)+\frac{1}{2}\int_{R^3}\Big(\delta P'(1)|\nabla^2 q|^2+\eta|\nabla^2 \diverg u|^2+\mu|\nabla^3 u|^2+\nu|\nabla^3 M|^2 \Big)dx\\
&\leq\frac{P'(1)}{4}\delta\|\nabla^2 q\|^2+\frac{\eta}{2}\delta\|\nabla^2 \diverg u\|^2+\frac{\mu}{2}\delta\|\nabla^3 u\|^2+C\delta\|\nabla^2 q^L\|^2+\frac{1}{2}\delta\|\nabla \diverg u^L\|^2\\&+\delta\|\nabla \diverg u\|^2+ C_{1}\delta \Big(\|\nabla ^2 u\|^2+\|\nabla^2 M\|^2 \Big)+\frac{P'(1)}{8}\delta\Big(\|\nabla^2 q\|^2+\|\nabla^2 u\|_{1}^2+\|\nabla^2 M\|_{1}^2 \Big).
\end{aligned}
\end{eqnarray}
By Lemma \ref{A.1}, it can check
\begin{eqnarray}\label{L2.5.8}
\begin{aligned}
\frac{\mu}{2}\|\nabla^3 u\|^2+\frac{\nu}{2}\|\nabla^3 M\|^2\geq \frac{\mu}{4}\|\nabla^3 u\|^2+\frac{\mu}{4}B_{0}^2\|\nabla^2 u^h\|^2+\frac{\nu}{4}\|\nabla^3 M\|^2+\frac{\nu}{4}B_{0}^2\|\nabla^2 M^h\|^2.
\end{aligned}
\end{eqnarray}
Putting (\ref{L2.5.8}) into (\ref{L2.5.7}), and then adding $\frac{\mu}{4}B_{0}^2\|\nabla^2 u^L\|^2+\frac{\nu}{4}B_{0}^2\|\nabla^2 M^L\|^2$ on either side of the result, it follows
\begin{eqnarray}\label{L2.5.9}
\begin{aligned}
&\frac{d}{dt}\Big(E(t)-\delta\int_{R^3}\nabla u \cdot \nabla^2 q^L dx\Big)+\frac{P'(1)}{8}\delta\|\nabla^2 q\|^2+\frac{\eta}{2}\|\nabla^2 \diverg u\|^2\\
&+\frac{\mu}{4}\|\nabla^3 u\|^2+\frac{\nu}{4}\|\nabla^3 M\|^2+\frac{\mu}{8}B_{0}^2\|\nabla^2 u\|^2+\frac{\nu}{8}B_{0}^2\|\nabla^2 M\|^2\\
&\leq 
\frac{\eta}{2}\delta\|\nabla^2 \diverg u\|^2+\frac{\mu}{2}\delta\|\nabla^3 u\|^2+C\delta\|\nabla^2 q^L\|^2+\frac{1}{2}\delta\|\nabla \diverg u^L\|^2+\delta\|\nabla \diverg u\|^2\\&+ C_{1}\delta \Big(\|\nabla ^2 u\|^2+\|\nabla^2 M\|^2 \Big)+\frac{P'(1)}{8}\delta \Big(\|\nabla^2 u\|_{1}^2+\|\nabla^2 M\|_{1}^2 \Big)\\
&+\frac{\mu}{4}B_{0}^2\|\nabla^2 u^L\|^2+\frac{\nu}{4}B_{0}^2\|\nabla^2 M^L\|^2,
\end{aligned}
\end{eqnarray}
then, noticing $\delta\leq\min\Big\{\frac{1}{8},\frac{\mu}{2P'(1)},\frac{\nu}{P'(1)}\Big\}:=\delta_{0}$ and then 
$B_{0}^2 \geq \max\Big\{\frac{48\delta_{0}}{\mu},\frac{48C_{1}\delta_{0}}{\mu},\frac{6P'(1)\delta_{0}}{\mu},\frac{32C_{1}\delta_{0}}{\nu},\frac{4P'(1)\delta_{0}}{\nu}\Big\}$,
we can check it out
\begin{eqnarray}\label{L2.5.10}
\begin{aligned}
&\frac{d}{dt}\Big(E(t)-\delta\int_{R^3}\nabla u \cdot \nabla^2 q^L dx\Big)+\frac{P'(1)}{8}\delta\|\nabla^2 q\|^2+\frac{\eta}{4}\|\nabla^2 \diverg u\|^2\\
&+\frac{\mu}{8}\|\nabla^3 u\|^2+\frac{\nu}{8}\|\nabla^3 M\|^2+\frac{\mu}{16}B_{0}^2\|\nabla^2 u\|^2+\frac{\nu}{16}B_{0}^2\|\nabla^2 M\|^2\\
&\leq C\|\nabla^2(q^L,u^L,M^L)\|^2.
\end{aligned}
\end{eqnarray}
By (\ref{A.5}) and Lemma \ref{LA.1} respectively, it proves
\begin{eqnarray}\label{L2.5.11}
\begin{aligned}
&E(t)-\delta\int_{R^3}\nabla u \cdot \nabla^2 q^L dx=\delta\int_{R^3}\nabla u \cdot \nabla^2 q^h dx+\frac{P'(1)}{2}\|\nabla^2 q\|^2+\frac{1}{2}\|\nabla^2 u\|^2+\frac{1}{2}\|\nabla^2 M\|^2,\\
&\delta\int_{R^3}\nabla u \cdot \nabla^2 q^h dx
=-\delta\int_{R^3} \nabla \diverg u \cdot\nabla q^h dx
\leq\frac{\delta}{2}\|\nabla q^h\|^2+\frac{\delta}{2}\|\nabla \diverg u\|^2
\leq\frac{\delta}{2}\|\nabla^2 q\|^2+\frac{\delta}{2}\|\nabla^2 u\|^2,
\end{aligned}
\end{eqnarray}
based on the smallness of $\delta$, it implies
\begin{eqnarray}\label{L2.5.12}
E(t)-\delta\int_{R^3}\nabla u \cdot \nabla^2 q^L dx\sim\|\nabla^2(q,u,M)\|^2.
\end{eqnarray}
Together with $(\ref{L2.5.10})$ and $(\ref{L2.5.12})$, a constant $C_{*}$ can exist so that
\begin{eqnarray}\label{L2.5.13}
\begin{aligned}
&\frac{d}{dt}\Big(E(t)-\delta\int_{R^3}\nabla u \cdot \nabla^2 q^L dx\Big)+C_{*}\Big(E(t)-\delta\int_{R^3}\nabla u \cdot \nabla^2 q^L dx \Big)\\
&\leq C\|\nabla^2(q^L,u^L,M^L)\|^2.
\end{aligned}
\end{eqnarray}
Integrating $(\ref{L2.5.13})\times e^{C_{*}t}$ on $[T_{*},t]$ which respects to time, we can check it out
\begin{eqnarray}\label{L2.5.14}
\begin{aligned}
E(t)-\delta\int_{R^3}\nabla u \cdot \nabla^2 q^L dx
&\leq C e^{-C_{*} t}\Big(E(T_{*})-\delta\int_{R^3}\nabla u(T_{*})\cdot \nabla^2 q^L(T_{*}) dx\Big)\\
&+C\int_{T_{*}}^t e^{-C_{*}(t-\tau)}\|\nabla^2(q^L,u^L,M^L)(\tau)\|^2 d\tau.
\end{aligned}
\end{eqnarray}
The combination of $(\ref{L2.5.12})$ and $(\ref{L2.5.14})$ leads to
\begin{eqnarray}\nonumber
\|\nabla^2 (q,u,M)(t)\|^2\leq Ce^{-C_{*} t}\|\nabla^2(q,u,M)(T_{*})\|^2+C\int_{T_{*}}^t e^{-C_{*}(t-\tau)}\|\nabla^2(q^L,u^L,M^L)(\tau)\|^2 d\tau.
\end{eqnarray}
\end{proof}
{\subsection {$L_{x}^2 L_{t}^\infty$-norm-estimation for low-middle-frequency portion}}
\par
This part is $L_{x}^2 L_{t}^\infty$-norm-estimation for low-middle-frequency portion of $(q,u,M)$ about nonlinear equations $(\ref{2.1})$ and $(\ref{2.2})$, which is based on the analysis of linearized equations in Appendix B.
\par
Define  differential operator $\mathbf{Q}$
\begin{eqnarray}\label{Q}
\mathbf{Q}=
\begin{pmatrix}
0& \diverg &0\\
P'(1)\nabla &-\eta\nabla  \diverg-\mu\Delta &0\\
0&0&-\nu\Delta
\end{pmatrix}
\end{eqnarray}
 so, $(\ref{2.1})$ and $(\ref{2.2})$ can be rewritten as
 \begin{eqnarray}\label{G1}
 \begin{cases}
   \partial_{t}X+\mathbf{Q}X=F(X),\\
   X|_{t=0}=X(0),
 \end{cases}
\end{eqnarray}
where $X(t):=(q(t),u(t),M(t))^{T}$, $F(X):=(\mathfrak{a},\mathfrak{b},\mathfrak{c})^{T}$, and $X(0):=(q_{0},u_{0},M_{0})^{T}$. Further, denote
 $\widetilde{X}(t):=(\widetilde{q}(t),\widetilde{u}(t),\widetilde{M}(t))^{T}$, the linearized equations are as below
\begin{eqnarray}\label{G2}
 \begin{cases}
   \partial_{t}\widetilde{X}+\mathbf{Q}\widetilde{X}=0,\\
   \widetilde{X}|_{t=0}=X(0),
 \end{cases}
\end{eqnarray}
we can calculate its solution: $\widetilde{X}=\mathbf{q}(t)X(0),~~\mathbf{q}(t)=e^{-t\mathbf{Q}}$.
\par
In Appendix B, we analyze the estimations for  low-middle-frequency portion of solution about equations $(\ref{G2})$, which leads to Lemma \ref{Lemma2.5}.
\begin{Lemma}\label{Lemma2.5}  Suppose $1\leq j\leq 2$, the following estimation  holds for  $\forall ~\text{integer}~ k\geq0$
\begin{eqnarray}\label{L2.6}
\|\nabla^k \big(\mathbf{q}(t)X^L(0)\big)\|\leq C\|X(0)\|_{L^j}(1+t)^{-[\frac{3}{2}(\frac{1}{j}-\frac{1}{2})+\frac{k}{2}]}.
\end{eqnarray}
\end{Lemma}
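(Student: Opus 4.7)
The plan is to pass to the Fourier side and treat $\mathbf{q}(t)=e^{-t\mathbf{Q}}$ as an explicit Fourier multiplier. First I would take the Fourier transform of $(\ref{G2})$ to obtain $\partial_t\widehat{X}+\widehat{\mathbf{Q}}(\xi)\widehat{X}=0$, where $\widehat{\mathbf{Q}}(\xi)$ is the symbol matrix. Observe that $M$ decouples completely and $\widehat{M}(t,\xi)=e^{-\nu|\xi|^2 t}\widehat{M}(0,\xi)$. For the $(q,u)$ subsystem, I would use the Helmholtz decomposition $\widehat{u}=\widehat{u}_\parallel+\widehat{u}_\perp$, where the component $\widehat{u}_\perp$ transverse to $\xi$ satisfies $\partial_t\widehat{u}_\perp+\mu|\xi|^2\widehat{u}_\perp=0$ (pure heat decay), while $(\widehat{q},\widehat{u}_\parallel)$ solves a $2\times 2$ linear ODE whose characteristic equation is $\lambda^2+(\mu+\eta)|\xi|^2\lambda+P'(1)|\xi|^2=0$. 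For $|\xi|$ small enough, the two roots $\lambda_\pm(\xi)$ have real part comparable to $-|\xi|^2$. Consequently, on the low-medium-frequency region $\{|\xi|\le R_0\}$ defined in Appendix A, one has the pointwise bound
\begin{equation*}
|\widehat{\mathbf{q}}(t,\xi)|\le C e^{-c_0|\xi|^2 t},\qquad |\xi|\le R_0,
\end{equation*}
for constants $c_0,C>0$ independent of $t$.

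Given this symbol bound, the lemma follows from Plancherel combined with Hausdorff--Young and Hölder. I would write
\begin{equation*}
\|\nabla^k(\mathbf{q}(t)X^L(0))\|^2=\int_{|\xi|\le R_0}|\xi|^{2k}|\widehat{\mathbf{q}}(t,\xi)|^2|\widehat{X}(0,\xi)|^2\,d\xi\le C\int_{|\xi|\le R_0}|\xi|^{2k}e^{-2c_0|\xi|^2 t}|\widehat{X}(0,\xi)|^2\,d\xi.
\end{equation*}
For $j=1$ I use $\|\widehat{X}(0)\|_{L^\infty}\le\|X(0)\|_{L^1}$ and evaluate $\int_{|\xi|\le R_0}|\xi|^{2k}e^{-2c_0|\xi|^2 t}\,d\xi\le C(1+t)^{-\frac{3}{2}-k}$ by a direct change of variables $\eta=\sqrt{t}\,\xi$. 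For general $j\in[1,2]$, let $j'$ be its Hölder conjugate, then Hausdorff--Young gives $\|\widehat{X}(0)\|_{L^{j'}}\le C\|X(0)\|_{L^j}$, and applying Hölder with exponents $j'/2$ and $j'/(j'-2)$ to the $\xi$-integral yields the factor $(1+t)^{-[\frac{3}{2}(\frac{1}{j}-\frac{1}{2})+\frac{k}{2}]}$, which is the desired rate.

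The main obstacle is the spectral analysis step: justifying the exponential-Gaussian upper bound $|\widehat{\mathbf{q}}(t,\xi)|\le C e^{-c_0|\xi|^2 t}$ uniformly in the low-medium frequency region. This requires writing the semigroup explicitly in terms of the eigenvalues $\lambda_\pm(\xi)$ and the associated projectors, then carefully checking that the projectors remain bounded and the eigenvalues stay separated enough to control the resolvent, including near $|\xi|=0$ where the acoustic roots merge. Once this is done on the low-medium-frequency block (treated in detail in Appendix B via the truncation used in Lemma \ref{LA.1}), everything else is a routine multiplier computation and the stated estimate follows.
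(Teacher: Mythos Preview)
Your proposal is correct and follows essentially the same route as the paper: pass to the Fourier side, decouple $M$ and the incompressible part of $u$ (the paper writes $\Upsilon=\Omega^{-1}\diverg u$ and $\Gamma u=\Omega^{-1}\curl u$), obtain a pointwise Gaussian-type bound on the symbol of $e^{-t\mathbf{Q}}$ over the low--medium frequency ball, then conclude by Plancherel, Hausdorff--Young and H\"older. The only cosmetic difference is that the paper does not claim a single bound $|\widehat{\mathbf{q}}(t,\xi)|\le Ce^{-c_0|\xi|^2 t}$ on all of $\{|\xi|\le B_0\}$; instead it splits the integral into $\{|\xi|\le b_0\}$ (where a Lyapunov functional gives $e^{-C_l|\xi|^2 t}$) and $\{b_0\le|\xi|\le B_0\}$ (where Routh--Hurwitz on a compact annulus gives $e^{-\varsigma t}$), which of course can be merged into your single bound since $e^{-\varsigma t}\le e^{-(\varsigma/B_0^2)|\xi|^2 t}$ there.
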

\begin{proof}
By Plancherel theorem, (\ref{B.12}) and (\ref{B.14}) with $\mathfrak{b}=b_{0}$, $\mathfrak{B}=B_{0}$, one has
\begin{eqnarray}\label{L}
\begin{aligned}
\|\partial_{x}^k(\widetilde{q}^L,\widetilde{\Upsilon}^L,\widetilde{M}^L)(t)\|&=\|(i\xi)^k(\widehat{\widetilde{q}^L},\widehat{\widetilde{\Upsilon}^L},\widehat{\widetilde{M}^L})(t)\|_{L_{\xi}^2}\\
&=\bigg(\int_{R^3}|(i\xi)^k(\widehat{\widetilde{q}^L},\widehat{\widetilde{\Upsilon}^L},\widehat{\widetilde{M}^L})(\xi,t)|^2 d\xi\bigg)^{\frac{1}{2}}\\
&\leq C \bigg(\int_{|\xi|\leq B_{0}}|\xi|^{2|k|}|(\widehat{\widetilde{q}},\widehat{\widetilde{\Upsilon}},\widehat{\widetilde{M}})(\xi,t)|^2 d\xi\bigg)^{\frac{1}{2}}\\
&\leq C \bigg(\int_{|\xi|\leq b_{0}}|\xi|^{2|k|}e^{-C_{l}|\xi|^2 t}|(\widehat{q},\widehat{\Upsilon},\widehat{M})(\xi,0)|^2 d\xi\bigg)^{\frac{1}{2}}\\
&+C \bigg(\int_{b_{0}\leq|\xi|\leq B_{0}}|\xi|^{2|k|}e^{-\varsigma t}|(\widehat{q},\widehat{\Upsilon},\widehat{M})(\xi,0)|^2 d\xi\bigg)^{\frac{1}{2}},
\end{aligned}
\end{eqnarray}
then, taking H\"{o}lder as well as Hausdorff-Young inequalities on (\ref{L}),  for $\frac{1}{j}+\frac{1}{j'}=1$, $1\leq j\leq 2\leq j'\leq\infty$, it can check
\begin{eqnarray}\label{LL}
\begin{aligned}
\|\partial_{x}^k(\widetilde{q}^L,\widetilde{\Upsilon}^L,\widetilde{M}^L)(t)\|&\leq C \|(\widehat{q},\widehat{\Upsilon},\widehat{M})(0)\|_{L_{\xi}^{j'}}(1+t)^{-[\frac{3}{2}(\frac{1}{2}-\frac{1}{j'})+\frac{|k|}{2}]}\\
&\leq C \|(q,\Upsilon,M)(0)\|_{L^{j}}(1+t)^{-[\frac{3}{2}(\frac{1}{j}-\frac{1}{2})+\frac{|k|}{2}]}.
\end{aligned}
\end{eqnarray}
In the same way, from (\ref{B.16}), we obtain
\begin{eqnarray}\label{LLL}
\begin{aligned}
\|\partial_{x}^k(\widetilde{\Gamma u})^L(t)\|&\leq C \bigg(\int_{|\xi|\leq B_{0}}|\xi|^{2|k|}|\widehat{\widetilde{\Gamma u}}(\xi,t)|^2 d\xi\bigg)^{\frac{1}{2}}\\
&\leq C \bigg(\int_{|\xi|\leq B_{0}}|\xi|^{2|k|}e^{-\mu|\xi|^2 t}|\widehat{\Gamma u}(\xi,0)|^2 d\xi\bigg)^{\frac{1}{2}}\\
&\leq C \| u_{0}\|_{L^{j}}(1+t)^{-[\frac{3}{2}(\frac{1}{j}-\frac{1}{2})+\frac{|k|}{2}]}.
\end{aligned}
\end{eqnarray}
Combining (\ref{LL}) with (\ref{LLL}), we have verified (\ref{L2.6}).
\end{proof}
 The solution about equations $(\ref{G1})$ can be calculated by the semigroup method and Duhamel principle,
\begin{eqnarray}\label{G3}
X(t)=\mathbf{q}(t)X(0)+\int_{0}^t \mathbf{q}(t-\tau)F(X)(\tau)d\tau.
\end{eqnarray}
\par
Combining Lemma \ref{Lemma2.5} and $(\ref{G3})$, we obtain time decay estimation for low-middle-frequency portion of $(q,u,M)$ about nonlinear equations  $(\ref{2.1})$ and $(\ref{2.2})$.
\begin{Lemma}\label{Lemma2.6}  The following estimation  holds for  $\forall ~\text{integer}~ k\geq0$, $1\leq p\leq 2$
\begin{eqnarray}\label{L2.7}
\begin{aligned}
\|\nabla^k X^L(t)\|&\leq C\|X(0)\|_{L^1}(1+t)^{-\frac{3}{4}-\frac{k}{2}}+C\int_{0}^\frac{t}{2}\|F(X)(\tau)\|_{L^1}(1+t-\tau)^{-\frac{3}{4}-\frac{k}{2}}d\tau\\
&+C\int_{\frac{t}{2}}^t\|F(X)(\tau)\|(1+t-\tau)^{-\frac{k}{2}}d\tau.
\end{aligned}
\end{eqnarray}
\end{Lemma}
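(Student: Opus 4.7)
The plan is to combine the Duhamel representation (\ref{G3}) with the linear low-medium frequency decay in Lemma \ref{Lemma2.5}. Since the low-frequency cut-off is a Fourier multiplier, it commutes with the semigroup $\mathbf{q}(t)$, so applying the low-frequency projection to (\ref{G3}) gives
\begin{eqnarray}\nonumber
X^L(t)=\big(\mathbf{q}(t)X(0)\big)^L+\int_{0}^{t}\big(\mathbf{q}(t-\tau)F(X)(\tau)\big)^L d\tau.
\end{eqnarray}
Taking $\nabla^k$ and the $L^2$ norm reduces everything to bounding each of the three pieces that appear in the target inequality using Lemma \ref{Lemma2.5} with appropriate choices of $j$.

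For the free-evolution term I would apply Lemma \ref{Lemma2.5} with $j=1$, yielding directly
\begin{eqnarray}\nonumber
\|\nabla^k\big(\mathbf{q}(t)X(0)\big)^L\|\leq C\|X(0)\|_{L^1}(1+t)^{-\frac{3}{4}-\frac{k}{2}},
\end{eqnarray}
which is the first term on the right. For the Duhamel integral I would split the time interval as $\int_{0}^{t}=\int_{0}^{t/2}+\int_{t/2}^{t}$. On $[0,t/2]$, where $t-\tau\geq t/2$ is large so the semigroup has plenty of time to decay, I would again invoke Lemma \ref{Lemma2.5} with $j=1$ applied to $F(X)(\tau)$ in place of $X(0)$, obtaining $\|F(X)(\tau)\|_{L^1}(1+t-\tau)^{-3/4-k/2}$ inside the integral. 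On $[t/2,t]$, where $t-\tau$ may be small so $L^1\to L^2$ decay is too expensive, I would apply Lemma \ref{Lemma2.5} with $j=2$, which gives the weaker decay rate $(1+t-\tau)^{-k/2}$ but only costs an $L^2$ norm of the source.

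Assembling the three estimates produces exactly (\ref{L2.7}). The argument is essentially bookkeeping once Lemma \ref{Lemma2.5} is in hand; the only point that needs a line of justification is the commutativity of the low-frequency cut-off with $\mathbf{q}(t)$ and with $\nabla^k$, which is immediate on the Fourier side. There is no real obstacle here — Lemma \ref{Lemma2.5} has already done the genuine work of extracting heat-type decay in the low-middle frequency regime, and the choice $j=1$ versus $j=2$ on the two sub-intervals is the standard balancing one uses to optimize the source contribution in the Duhamel formula.
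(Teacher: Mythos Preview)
Your proposal is correct and follows exactly the approach the paper indicates: the paper does not spell out a proof of Lemma~\ref{Lemma2.6} but simply states that it is obtained by ``combining Lemma~\ref{Lemma2.5} and $(\ref{G3})$,'' and your argument does precisely that, with the standard $[0,t/2]\cup[t/2,t]$ splitting and the choice $j=1$ versus $j=2$ in the two sub-intervals. The commutation of the low-frequency projector with $\mathbf{q}(t)$ and with $\nabla^k$ is indeed immediate on the Fourier side, so there is nothing more to add.
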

{\subsection{Time decay estimation about nonlinear equations}}
\par
We will use Lemma \ref{Lemma2.4} and \ref{Lemma2.6} to arrive at  time decay estimation for $(\sigma,u,M)$ about nonlinear equations (\ref{2.1}) and (\ref{2.2}).
\begin{Lemma}\label{Lemma2.7} On the basis of conditions about Theorem \ref{Theorem1.1}, $\exists~\text{a large constant}~\widetilde{T}>0$ for $\forall t\geq \widetilde{T}$, $(\sigma,u,M)$ about (\ref{1.1}) has time decay estimation
\begin{eqnarray}\label{L2.8}
\|\nabla^k(\sigma-1,u,M)\|\leq C(1+t)^{-\frac{3}{4}-\frac{k}{2}},k=0,1,2.
 \end{eqnarray}
 \end{Lemma}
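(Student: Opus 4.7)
The cases $k=0$ and $k=1$ are already delivered by Theorem \ref{Theorem1.1} and by estimate (\ref{we}) recalled from \cite{Womensa}; only the case $k=2$ requires new work, and the plan is to chain Lemma \ref{Lemma2.4} with Lemma \ref{Lemma2.6}. Concretely, Lemma \ref{Lemma2.6} will produce time decay for the low--middle-frequency piece $\|\nabla^2 X^L(t)\|$ via the Duhamel formula, and feeding this into the integral on the right-hand side of Lemma \ref{Lemma2.4} will upgrade the bound to the full quantity $\|\nabla^2(q,u,M)(t)\|$.

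First, I would apply Lemma \ref{Lemma2.6} with $k=2$ and $p=1$, and control the nonlinear source $F(X)=(\mathfrak{a},\mathfrak{b},\mathfrak{c})^T$ using the expressions in (\ref{2.3}) together with the already established decay (\ref{Th3}) and (\ref{we}). Every term in $\mathfrak{a},\mathfrak{b},\mathfrak{c}$ is at least quadratic in $(q,u,M)$ and its first derivatives, so H\"older gives bounds of the form
\begin{eqnarray}\nonumber
\|F(X)(\tau)\|_{L^1}\leq C\|(q,u,M)(\tau)\|\,\|\nabla(q,u,M)(\tau)\|_{1}\leq C(1+\tau)^{-2},
\end{eqnarray}
using $\|(q,u,M)\|\leq C(1+\tau)^{-3/4}$ and $\|\nabla(q,u,M)\|_{1}\leq C(1+\tau)^{-5/4}$. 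For the $L^2$-norm required by the second integral of Lemma \ref{Lemma2.6}, I would combine H\"older with the Gagliardo--Nirenberg inequality $\|f\|_{L^\infty}\leq C\|\nabla f\|^{1/2}\|\nabla^2 f\|^{1/2}$ (obtained by chaining $H^1\hookrightarrow L^6$ with a further interpolation), together with the consequence $\|\nabla^2(q,u,M)\|\leq C(1+\tau)^{-5/4}$ of (\ref{we}), to derive $\|F(X)(\tau)\|\leq C(1+\tau)^{-15/8}$ for the worst term. Plugging both estimates into Lemma \ref{Lemma2.6} and performing standard convolution estimates on $[0,t/2]$ and $[t/2,t]$, each of the three contributions is bounded by $C(1+t)^{-7/4}$, the last modulo a $\log(1+t)$ factor that is absorbed by the slack $15/8-7/4=1/8>0$. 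This yields $\|\nabla^2 X^L(t)\|\leq C(1+t)^{-7/4}$.

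Second, I would insert this bound into the conclusion of Lemma \ref{Lemma2.4}, obtaining
\begin{eqnarray}\nonumber
\|\nabla^2(q,u,M)(t)\|^2\leq Ce^{-C_* t}\|\nabla^2(q,u,M)(T_*)\|^2+C\int_{T_*}^t e^{-C_*(t-\tau)}(1+\tau)^{-\frac{7}{2}}d\tau.
\end{eqnarray}
Splitting the integral at $\tau=t/2$, the left half contributes an exponentially small term (since $(1+\tau)^{-7/2}$ is integrable) and the right half is bounded by $C(1+t)^{-7/2}$. Taking square roots gives the desired estimate $\|\nabla^2(q,u,M)(t)\|\leq C(1+t)^{-7/4}$ for $t\geq\widetilde{T}$, where $\widetilde{T}$ is chosen large enough to dominate $T_*$ and to make the leftover $\log$ factor truly negligible.

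The main technical obstacle is the $L^2$-bound on $F(X)$ in the latter half of the Duhamel integral of Lemma \ref{Lemma2.6}, since the kernel $(1+t-\tau)^{-1}$ is only marginally integrable: one must verify that $F(X)$ decays strictly faster than $(1+\tau)^{-7/4}$, and this is exactly where the sharper interpolation $\|f\|_{L^\infty}\leq C\|\nabla f\|^{1/2}\|\nabla^2 f\|^{1/2}$, rather than the cruder $\|f\|_{L^\infty}\leq C\|f\|^{1/4}\|\nabla^2 f\|^{3/4}$, becomes essential, so that the gradient decay from (\ref{we}) can be fully exploited.
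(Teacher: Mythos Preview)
Your approach is correct and is in fact more direct than the paper's. Both proofs feed Lemma~\ref{Lemma2.6} into Lemma~\ref{Lemma2.4}, but the paper does \emph{not} invoke (\ref{we}) inside the proof. Instead it runs a bootstrap: setting
\[
N(t)=\sup_{0\le\tau\le t}\sum_{n=0}^{2}(1+\tau)^{\frac34+\frac n2}\|\nabla^n(q,u,M)(\tau)\|,
\]
it bounds $\|F(X)(\tau)\|\le CN(t)(1+\tau)^{-17/8}$ (using only (\ref{Th2})--(\ref{Th3}) plus the definition of $N$), pushes this through Lemmas~\ref{Lemma2.6} and~\ref{Lemma2.4}, and arrives at $N^2(t)\le C_{**}\bigl(1+N^2(t)(1+t)^{-3/4}\bigr)$, which closes for $t\ge\widetilde T$. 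Your route treats $k=0,1$ as given input from (\ref{Th3}) and (\ref{we}), bounds $F(X)$ outright, and reads off $k=2$ without any bootstrap. The payoff of your version is brevity; the payoff of the paper's version is self-containment, since it re-derives all three orders $k=0,1,2$ from Theorem~\ref{Theorem1.1} alone.

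Two minor quibbles on your arithmetic. With (\ref{we}) one has $\|f\|_{L^\infty}\le C\|\nabla f\|^{1/2}\|\nabla^2 f\|^{1/2}\le C(1+\tau)^{-5/4}$, so every quadratic term in $\mathfrak a,\mathfrak b,\mathfrak c$ actually obeys $\|F(X)(\tau)\|\le C(1+\tau)^{-5/2}$, not merely $(1+\tau)^{-15/8}$; the slack over $7/4$ is $3/4$, not $1/8$. Relatedly, the ``cruder'' interpolation $\|f\|_{L^\infty}\le C\|f\|^{1/4}\|\nabla^2 f\|^{3/4}$ combined with (\ref{we}) already gives $\|f\|_{L^\infty}\le C(1+\tau)^{-9/8}$ and hence $\|F(X)\|\le C(1+\tau)^{-19/8}$, which still beats $7/4$; so the sharper Agmon-type bound is convenient but not strictly essential here. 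None of this affects the validity of your argument.
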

\begin{proof} Denote
\begin{eqnarray}\label{L2.8.1}
N(t):=\sup\limits_{0\leq\tau\leq t}\sum_{n=0}^2(1+\tau)^{\frac{3}{4}+\frac{n}{2}}\|\nabla^n(q,u,M)(\tau)\|
\end{eqnarray}
so, it has
\begin{eqnarray}\label{L2.8.2}
\|\nabla^n(q,u,M)(\tau)\|\leq CN(t)(1+\tau)^{-\frac{3}{4}-\frac{n}{2}},\quad 0\leq\tau\leq t,\quad 0\leq n\leq 2.
\end{eqnarray}
It's easy to check the following inequalities by (\ref{Th1}), H\"{o}lder and G-N inequalities
\begin{eqnarray}\label{L2.8.3}
\|\mathfrak{a}(\tau)\|_{L^1}\leq C\|(q,u)\|\|\nabla(q,u)\|,
\end{eqnarray}
\begin{eqnarray}\label{L2.8.4}
\begin{aligned}
\|\mathfrak{b}(\tau)\|_{L^1}&\leq C(\|M\|\|\nabla M\|+\|q\|\|\nabla^2 u\|+\|q\|\|\nabla q\|+\|u\|\|\nabla u\|)\\
&\leq C \Big(\|(q,u,M)\|\|\nabla(q,u,M)\|+\|q\|\|\nabla u\|^{\frac{1}{2}}\|\nabla^3 u\|^{\frac{1}{2}}\Big),
\end{aligned}
\end{eqnarray}
and
\begin{eqnarray}\label{L2.8.5}
\|\mathfrak{c}(\tau)\|_{L^1}\leq C\|(u,M)\|\|\nabla(u,M)\|.
\end{eqnarray}
By the combination of (\ref{L2.8.3})-(\ref{L2.8.5}) and the usage of (\ref{Th3}), one has
\begin{eqnarray}\label{L2.8.6}
\begin{aligned}
\|F(X)(\tau)\|_{L^1}&\leq \|\mathfrak{a}(\tau)\|_{L^1}+\|\mathfrak{b}(\tau)\|_{L^1}+\|\mathfrak{c}(\tau)\|_{L^1}\\
&\leq C(1+\tau)^{-\frac{3}{2}}+C \|q\|\|\nabla u\|^{\frac{1}{2}}\|\nabla^3 u\|^{\frac{1}{2}}.
\end{aligned}
\end{eqnarray}
Similarly, and using (\ref{L2.8.2}), (\ref{Th2}) and (\ref{Th3}), it obtains
\begin{eqnarray}\label{L2.8.7}
\|\mathfrak{a}(\tau)\|\leq C\|\nabla^2(q,u)\|\|(q,u)\|_{1}\leq C N(t)(1+\tau)^{-\frac{7}{4}}(1+\tau)^{-\frac{3}{4}}\leq C N(t)(1+\tau)^{-\frac{10}{4}},
\end{eqnarray}
\begin{eqnarray}\label{L2.8.8}
\begin{aligned}
\|\mathfrak{b}(\tau)\|&\leq C(\|M\|_{L^3}\|\nabla M\|_{L^6}+\|q\|_{L^\infty}\|\nabla^2 u\|+\|q\|_{L^3}\|\nabla q\|_{L^6}+\|u\|_{L^3}\|\nabla u\|_{L^6})\\
&\leq C \Big(\|\nabla^2(q,u,M)\|\|(q,u,M)\|_{1}+\|\nabla^2 u\|\|\nabla q\|^{\frac{1}{2}}\|\nabla^2 q\|^{\frac{1}{2}}\Big)\\
&\leq C \Big(N(t)(1+\tau)^{-\frac{7}{4}}(1+\tau)^{-\frac{3}{4}}+N(t)(1+\tau)^{-\frac{7}{4}}(1+\tau)^{-\frac{3}{8}}\Big)\\
&\leq C \Big(N(t)(1+\tau)^{-\frac{10}{4}}+N(t)(1+\tau)^{-\frac{17}{8}}\Big)\\
&\leq CN(t)(1+\tau)^{-\frac{17}{8}},
\end{aligned}
\end{eqnarray}
and
\begin{eqnarray}\label{L2.8.9}
\|\mathfrak{c}(\tau)\|\leq C\|\nabla^2(u,M)\|\|(u,M)\|_{1}\leq C N(t)(1+\tau)^{-\frac{10}{4}}.
\end{eqnarray}
By the combination of (\ref{L2.8.7})-(\ref{L2.8.9}), one has
\begin{eqnarray}\label{L2.8.10}
\begin{aligned}
\|F(X)(\tau)\|&\leq \|\mathfrak{a}(\tau)\|+\|\mathfrak{b}(\tau)\|+\|\mathfrak{c}(\tau)\|\\
&\leq CN(t)(1+\tau)^{-\frac{17}{8}}.
\end{aligned}
\end{eqnarray}
According to (\ref{L2.8.6}), (\ref{L2.8.10}) and Lemma \ref{Lemma2.6}, for $k\in[0,2]$, one may check
\begin{eqnarray}\label{L2.8.11}
\begin{aligned}
\|\nabla^k X^L(t)\|&\leq C\|X(0)\|_{L^1}(1+t)^{-\frac{3}{4}-\frac{k}{2}}\\&+C \int_{0}^\frac{t}{2}\Big((1+\tau)^{-\frac{3}{2}}+ \|q\|\|\nabla u\|^{\frac{1}{2}}\|\nabla^3 u\|^{\frac{1}{2}}\Big)(1+t-\tau)^{-\frac{3}{4}-\frac{k}{2}}d\tau\\
&+CN(t)\int_{\frac{t}{2}}^t(1+\tau)^{-\frac{17}{8}}(1+t-\tau)^{-\frac{k}{2}}d\tau\\
&\leq C\|X(0)\|_{L^1}(1+t)^{-\frac{3}{4}-\frac{k}{2}}+C(1+t)^{-\frac{3}{4}-\frac{k}{2}}+CN(t)(1+t)^{-\frac{9}{8}-\frac{k}{2}}\\
&\leq C (1+t)^{-\frac{3}{4}-\frac{k}{2}}\Big(\|X(0)\|_{L^1}+1+N(t)(1+t)^{-\frac{3}{8}}\Big),
\end{aligned}
\end{eqnarray}
where we have adopted Young inequality, (\ref{Th3}) and (\ref{Th2}) to calculate
\begin{eqnarray}\nonumber
\begin{aligned}
&\int_{0}^\frac{t}{2}\|q\|\|\nabla u\|^{\frac{1}{2}}\|\nabla^3 u\|^{\frac{1}{2}}(1+t-\tau)^{-\frac{3}{4}-\frac{k}{2}} d\tau\\
&\leq C \int_{0}^\frac{t}{2}\Big(\|(q,\nabla u)(\tau)\|^2+\|\nabla^3 u(\tau)\|^2 \Big)(1+t-\tau)^{-\frac{3}{4}-\frac{k}{2}} d\tau\\
&\leq C \int_{0}^\frac{t}{2}\Big((1+\tau)^{-\frac{3}{2}}+\|\nabla^3 u(\tau)\|^2 \Big)(1+t-\tau)^{-\frac{3}{4}-\frac{k}{2}} d\tau\\
&\leq C (1+t)^{-\frac{3}{4}-\frac{k}{2}}.
\end{aligned}
\end{eqnarray}
From Lemma \ref{Lemma2.4}, and use (\ref{L2.8.11}), yields directly for $k\in[0,2]$, $t\geq T_{*}$
\begin{eqnarray}\label{L2.8.12}
\begin{aligned}
\|\nabla^2 X(t)\|^2&\leq C e^{-C_{*} t}\|\nabla^2 X(T_{*})\|^2+C\int_{T_{*}}^t e^{-C_{*}(t-\tau)}(1+\tau)^{-\frac{7}{2}}\Big(\|X(0)\|_{L^1}^2+1+N^2(\tau)(1+\tau)^{-\frac{3}{4}}\Big) d\tau\\
&\leq C e^{-C_{*} t}\|\nabla^2 X(T_{*})\|^2+C(1+t)^{-\frac{7}{2}}\Big(\|X(0)\|_{L^1}^2+1\Big)+CN^2(t)(1+t)^{-\frac{17}{4}}\\
&\leq C e^{-C_{*} t}\|\nabla^2 X(T_{*})\|^2+C(1+t)^{-\frac{7}{2}}\Big(\|X(0)\|_{L^1}^2+1+N^2(t)(1+t)^{-\frac{3}{4}}\Big).
\end{aligned}
\end{eqnarray}
In addition, according to (\ref{A.5}) and lemma \ref{LA.1}, it gets for $k\in[0,2]$
\begin{eqnarray}\label{L2.8.13}
\|\nabla^k X(t)\|^2\leq C\Big(\|\nabla^k X^L (t)\|^2+\|\nabla^k X^h (t)\|^2 \Big)\leq C\Big(\|\nabla^k X^L (t)\|^2+\|\nabla^2 X(t)\|^2 \Big).
\end{eqnarray}
Substituting (\ref{L2.8.11}) and (\ref{L2.8.12}) into (\ref{L2.8.13}), for $k\in[0,2]$, $t\geq T_{*}$, we have
\begin{eqnarray}\label{L2.8.14}
\begin{aligned}
\|\nabla^k X(t)\|^2 &\leq C (1+t)^{-\frac{3}{2}-k} \Big(\|X(0)\|_{L^1}^2+1+N^2(t)(1+t)^{-\frac{3}{4}}\Big)\\
&+ C e^{-C_{*} t}\|\nabla^2 X(T_{*})\|^2+C(1+t)^{-\frac{7}{2}}\Big(\|X(0)\|_{L^1}^2+1+N^2(t)(1+t)^{-\frac{3}{4}}\Big)\\
&\leq C (1+t)^{-\frac{3}{2}-k}\Big(\|X(0)\|_{L^1}^2+1+N^2(t)(1+t)^{-\frac{3}{4}}\Big)+ C e^{-C_{*} t}\|\nabla^2 X(T_{*})\|^2.
\end{aligned}
\end{eqnarray}
Thus, from (\ref{L2.8.1}) and (\ref{L2.8.14}), $\exists~C_{**}>0$ such that
\begin{eqnarray}\nonumber
N^2(t)\leq C_{**}\Big(\|X(0)\|_{L^1}^2+1+N^2(t)(1+t)^{-\frac{3}{4}}+\|\nabla^2 X(T_{*})\|^2\Big).
\end{eqnarray}
Further, $\exists~\widetilde{T}>0$ for $t\geq \widetilde{T}$, it holds
\begin{eqnarray}\nonumber
C_{**}(1+t)^{-\frac{3}{4}}\leq\frac{1}{2},
\end{eqnarray}
such that
\begin{eqnarray}\nonumber
N^2(t)\leq 2C_{**}\Big(\|X(0)\|_{L^1}^2+1+\|\nabla^2 X(T_{*})\|^2\Big),
\end{eqnarray}
combining it with (\ref{Th2}), we can obtain for $\forall t\geq \widetilde{T}$
\begin{eqnarray}\nonumber
N(t)\leq C.
\end{eqnarray}
According to (\ref{L2.8.2}), we have proved (\ref{L2.8}), further more, we have confirmed Theorem \ref{Theorem1.2}.
\end{proof}
\section*{Appendix}
\appendix
\section{Frequency decomposition}
For $\chi_{x}=\frac{1}{i}\nabla=\frac{1}{i}(\partial_{x_{1}},\partial_{x_{2}},\partial_{x_{3}})$, $\phi_{0}(\chi_{x})$ and $\phi_{1}(\chi_{x})$ are pseudo differential operators.  $0\leq \phi_{0}(\xi),\phi_{1}(\xi)\leq 1$ $(\xi\in R^3)$ are cut off and also smooth functions which satisfy
\begin{eqnarray}\nonumber
 \phi_{0}(\xi)=
 \begin{cases}
 0,~~|\xi|>b_{0},\\
 1,~~|\xi|<\frac{b_{0}}{2},
 \end{cases}~~
 \phi_{1}(\xi)=
 \begin{cases}
 1,~~|\xi|>B_{0}+1,\\
 0,~~|\xi|<B_{0},
 \end{cases}
\end{eqnarray}
Here, $b_{0}$ and $B_{0}$ (fixed constants) satisfy
\begin{eqnarray}\label{A.2}
0<b_{0}\leq \sqrt{\frac{P'(1)}{\eta+\mu}},
\end{eqnarray}
\begin{eqnarray}\label{A.3}
B_{0} \geq \max\Big\{\sqrt{\frac{48\delta_{0}}{\mu}},\sqrt{\frac{48C_{1}\delta_{0}}{\mu}},\sqrt{\frac{6P'(1)\delta_{0}}{\mu}},\sqrt{\frac{32C_{1}\delta_{0}}{\nu}},\sqrt{\frac{4P'(1)\delta_{0}}{\nu}}\Big\},
\end{eqnarray}
Thus, by Fourier transform, for a function $g(x)\in L^2(R^3)$, let's define a frequency decomposition $(g^l(x),g^m(x),g^h(x))$
\begin{eqnarray}\label{A.1}
g^l(x)=\phi_{0}(\chi_{x})g(x),~~
g^m(x)=(I-\phi_{0}(\chi_{x})-\phi_{1}(\chi_{x}))g(x),~~
g^h(x)=\phi_{1}(\chi_{x})g(x).
\end{eqnarray}
Further, denote
\begin{eqnarray}\label{A.4}
g^{L}(x):=g^l(x)+g^m(x),~~g^{H}(x):=g^m(x)+g^h(x),
\end{eqnarray}
one has
\begin{eqnarray}\label{A.5}
g(x)=g^l(x)+g^m(x)+g^h(x):=g^l(x)+g^{H}(x):=g^{L}(x)+g^h(x).
\end{eqnarray}
Using Plancherel theorem and (\ref{A.1}), the following inequalities can be gained.
\begin{Lemma}\label{LA.1}
$\forall~\text{integers}~p_{0}, p, p_{1}, s$, for $p_{0}\leq p\leq p_{1} \leq s$ and $g(x)\in H^s(R^3)$, it gets
\begin{eqnarray}\label{A.6}
\|\nabla^p g^l\|\leq b_{0}^{p-p_{0}}\|\nabla^{p_{0}} g^l\|,~~~\|\nabla^p g^l\|\leq \|\nabla^{p_{1}} g\|,
\end{eqnarray}
\begin{eqnarray}\label{A.7}
\|\nabla^p g^h\|\leq \frac{1}{B_{0}^{p_{1}-p}}\|\nabla^{p_{1}} g^h\|,~~~\|\nabla^p g^h\|\leq \|\nabla^{p_{1}} g\|,
\end{eqnarray}
and
\begin{eqnarray}\label{A.8}
b_{0}^p\|g^m\|\leq \|\nabla^p g^m\|\leq B_{0}^p\|g^m\|.
\end{eqnarray}
\end{Lemma}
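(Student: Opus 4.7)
The plan is to convert every inequality in (A.6)--(A.8) into an elementary pointwise estimate on $|\xi|$ via Plancherel's theorem, exploiting the explicit Fourier-side supports forced by the cut-off functions $\phi_0,\phi_1$ defined at the start of the appendix. Concretely, $\widehat{g^l}(\xi)=\phi_0(\xi)\widehat{g}(\xi)$ vanishes outside $\{|\xi|\leq b_0\}$, $\widehat{g^h}(\xi)=\phi_1(\xi)\widehat{g}(\xi)$ vanishes outside $\{|\xi|\geq B_0\}$, and $\widehat{g^m}(\xi)=(1-\phi_0-\phi_1)\widehat{g}(\xi)$ is supported in the annulus $\{b_0/2\leq|\xi|\leq B_0+1\}$. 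On each of these three sets the monomial $|\xi|^p$ admits a one- or two-sided comparison with a power of $b_0$ or $B_0$, and that comparison is essentially the whole content of the lemma.

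For the first inequality in (A.6), I would start from Plancherel,
\[
\|\nabla^p g^l\|^2=\int_{|\xi|\leq b_0}|\xi|^{2(p-p_0)}\,|\xi|^{2p_0}\,|\widehat{g^l}(\xi)|^2\,d\xi,
\]
and observe that $p-p_0\geq 0$ together with $|\xi|\leq b_0$ on the support gives the pointwise bound $|\xi|^{2(p-p_0)}\leq b_0^{2(p-p_0)}$; pulling the constant out and reassembling $\|\nabla^{p_0}g^l\|^2$ yields the claim. The second inequality in (A.6) falls out from $|\phi_0|\leq 1$ together with the analogous monomial reshuffle against the untruncated integral $\|\nabla^{p_1}g\|^2$.

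The inequalities (A.7) are obtained by the mirror-image computation on the high-frequency end: on the support of $\widehat{g^h}$ one has $|\xi|\geq B_0$, so for $p\leq p_1$ the pointwise bound $|\xi|^{2(p-p_1)}\leq B_0^{2(p-p_1)}$ holds, and the factorization $|\xi|^{2p}=|\xi|^{2(p-p_1)}|\xi|^{2p_1}$ produces the prefactor $1/B_0^{p_1-p}$ after pulling the constant out and recognizing $\|\nabla^{p_1}g^h\|^2$; $|\phi_1|\leq 1$ handles the second inequality similarly. Finally, (A.8) is the single case in which a two-sided sandwich is available: on the annular support of $\widehat{g^m}$ the quantity $|\xi|^p$ is pinched from below and above by constants comparable to $b_0^p$ and $B_0^p$ respectively, so one application of Plancherel delivers both the lower and the upper bound simultaneously.

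The only obstacle anywhere is bookkeeping --- matching the correct cut-off threshold ($b_0$ to the low side, $B_0$ to the high side) with each target inequality, and choosing the factorization $|\xi|^{2p}=|\xi|^{2(p-p_\ast)}|\xi|^{2p_\ast}$ so that the sign of $p-p_\ast$ allows the $|\xi|$ factor to be replaced by a constant on the relevant support. No analytical machinery beyond Plancherel's theorem and monotonicity of $t\mapsto t^\alpha$ on $(0,\infty)$ is required.
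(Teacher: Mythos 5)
Your overall route --- Plancherel plus the Fourier-side supports forced by $\phi_0,\phi_1$ --- is exactly what the paper intends (it offers nothing beyond the one-line remark preceding the lemma), and your treatment of the first inequalities in \eqref{A.6} and \eqref{A.7} is correct. But your dispatch of the second inequality in \eqref{A.6} fails. On $\mathrm{supp}\,\widehat{g^l}\subset\{|\xi|\le b_0\}$ with $p<p_1$, the pointwise comparison you would need, $|\xi|^p\le|\xi|^{p_1}$, goes the \emph{wrong} way wherever $|\xi|<1$, and no factorization $|\xi|^{2p}=|\xi|^{2(p-p_1)}|\xi|^{2p_1}$ rescues it, since $|\xi|^{2(p-p_1)}$ blows up at $\xi=0$. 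In fact $\|\nabla^p g^l\|\le\|\nabla^{p_1}g\|$ is false as stated: take $\widehat g$ supported in $\{|\xi|\le\min(b_0/2,1/2)\}$ and $p=p_0=0$, $p_1=1$; then $g^l=g$ while $\|\nabla g\|\le\tfrac12\|g\|<\|g\|$. What is true (and is all the body of the paper ever uses) is $\|\nabla^p g^l\|\le\|\nabla^{p}g\|$, immediate from $0\le\phi_0\le1$; the $p_1$ in the statement is evidently a slip, and your proposal should have detected it rather than asserted the bound ``falls out.'' Relatedly, your remark that ``$|\phi_1|\le1$ handles the second inequality similarly'' in \eqref{A.7} silently uses $B_0\ge1$: the chain $\|\nabla^p g^h\|\le B_0^{-(p_1-p)}\|\nabla^{p_1}g^h\|\le B_0^{-(p_1-p)}\|\nabla^{p_1}g\|$ gives the claim only if $B_0^{-(p_1-p)}\le1$, and \eqref{A.3} is merely a lower bound whose right-hand side (built from the small constant $\delta_0$) need not exceed $1$; one must add that $B_0$ is also taken $\ge 1$, as the paper itself implicitly assumes when it invokes the lemma in \eqref{L2.5.11} and \eqref{L2.8.13} with $p<p_1$.

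For \eqref{A.8} your sandwich is right in spirit but does not produce the stated constants. Since $\phi_0$ vanishes only for $|\xi|>b_0$ and equals $1$ only for $|\xi|<b_0/2$ (and symmetrically $\phi_1$ switches between $B_0$ and $B_0+1$), the support of $\widehat{g^m}=(1-\phi_0-\phi_1)\widehat g$ is $\{b_0/2\le|\xi|\le B_0+1\}$, and $1-\phi_0-\phi_1$ is in general nonzero throughout $(b_0/2,b_0)$ and $(B_0,B_0+1)$. Plancherel therefore yields $(b_0/2)^p\|g^m\|\le\|\nabla^p g^m\|\le(B_0+1)^p\|g^m\|$, i.e.\ constants only \emph{comparable} to $b_0^p$ and $B_0^p$ --- exactly the hedge you wrote --- and not \eqref{A.8} with the literal constants $b_0^p$, $B_0^p$. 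The comparable version suffices for every later application, so this is largely a defect of the statement, but a careful write-up must either replace $b_0^p, B_0^p$ by $(b_0/2)^p,(B_0+1)^p$ or record explicitly that the middle-frequency equivalence holds only up to such fixed factors. In short: right method, matching the paper's (unwritten) proof, but two of the six inequalities need repair --- one because your step fails and the stated inequality is false as written, one because your argument proves a weaker constant than the one claimed.
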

\section{The analysis of linearized equations}
Denote $\Omega :=(-\Delta)^{\frac{1}{2}}$, and $\Upsilon:=\Omega^{-1}  \diverg u$, then, $u=-\Omega^{-1} \nabla \Upsilon-\Omega^{-1} \diverg (\Omega^{-1} \curl u)$.\\
From (\ref{2.1}), it can get
\begin{eqnarray}\label{B.1}
 \begin{cases}
   \partial_{t}q+\Omega \Upsilon=\mathfrak{a},\\
   \partial_{t}\Upsilon -P'(1)\Omega q-(\eta+\mu)\Delta \Upsilon = \Omega^{-1} \diverg \mathfrak{b},\\
   \partial_{t}M-\nu\Delta M=\mathfrak{c},
 \end{cases}
\end{eqnarray}
while $\Gamma u=\Omega^{-1} \curl u$ satisfies
\begin{eqnarray}\label{B.2}
 \begin{cases}
   \partial_{t}\Gamma u -\mu \Delta \Gamma u = \Gamma \mathfrak{b},\\
   \Gamma u(x,0)=\Gamma u_{0}(x).
 \end{cases}
\end{eqnarray}
 Thus, based on the above analysis, the analysis of $u$ only relies on the analysis of $\Upsilon$ and $\Gamma u$.\\
By the application of Fourier transform on (\ref{B.1}), we can check
\begin{eqnarray}\label{B.3}
 \begin{cases}
    \partial_{t}\widehat{q}+|\xi| \widehat{\Upsilon}=\widehat{\mathfrak{a}},\\
   \partial_{t}\widehat{\Upsilon} -P'(1)|\xi| \widehat{q}+(\eta+\mu) |\xi|^2 \widehat{ \Upsilon} = \widehat{\Omega^{-1}\diverg \mathfrak{b}},\\
   \partial_{t}\widehat{M}+\nu|\xi|^2 \widehat{M}=\widehat{\mathfrak{c}},
 \end{cases}
\end{eqnarray}
further, the linearized equations are as below
\begin{eqnarray}\label{B.4}
 \begin{cases}
    \partial_{t}\widehat{q}+|\xi| \widehat{\Upsilon}=0,\\
   \partial_{t}\widehat{\Upsilon} -P'(1)|\xi| \widehat{q}+(\eta+\mu) |\xi|^2 \widehat{\Upsilon} = 0,\\
   \partial_{t}\widehat{M}+\nu|\xi|^2 \widehat{M}=0.
 \end{cases}
\end{eqnarray}
In fact, (\ref{B.4}) is equivalent to
\begin{eqnarray}\label{B.5}
 \frac{d}{dt}\begin{pmatrix}
\widehat{q}\\
\widehat{\Upsilon}\\
\widehat{M}
\end{pmatrix}
+\mathbb{Q}\begin{pmatrix}
\widehat{q}\\
\widehat{\Upsilon}\\
\widehat{M}
\end{pmatrix}=0,
\end{eqnarray}
while
\begin{eqnarray}\nonumber
\mathbb{Q}=
\begin{pmatrix}
0& |\xi| &0\\
-P'(1)|\xi| &(\eta+\mu) |\xi|^2 &0\\
0&0&\nu|\xi|^2
\end{pmatrix}.
\end{eqnarray}
\subsection{The study of low-frequency portion}
By (\ref{B.4}), one may calculate
\begin{eqnarray}\label{B.6}
 \frac{d}{dt}\Big(\frac{P'(1)|\widehat{q}|^2+|\widehat{\Upsilon}|^2+|\widehat{M}|^2}{2}\Big)+(\eta+\mu) |\xi|^2 |\widehat{\Upsilon}|^2+\nu|\xi|^2 |\widehat{M}|^2=0.
\end{eqnarray}
Adding up $(\ref{B.4})_{1} \times\bar{\widehat{\Upsilon}}$ with $\overline{(\ref{B.4})_{2}}\times\widehat{q}$, it can check
\begin{eqnarray}\label{B.7}
 \frac{d}{dt}Re(\widehat{q}\bar{\widehat{\Upsilon}})+|\xi||\widehat{\Upsilon}|^2-P'(1)|\xi||\widehat{q}|^2 =-(\eta+\mu)|\xi|^2 Re(\widehat{q}\bar{\widehat{\Upsilon}}).
\end{eqnarray}
Choosing $\delta_{*}>0$ (fixed small constant ) and summing up $-\delta_{*}|\xi|\times$ (\ref{B.7}) with (\ref{B.6}), yields directly
\begin{eqnarray}\label{B.8}
\begin{aligned}
  &\frac{d}{dt}\Big(\frac{P'(1)|\widehat{q}|^2+|\widehat{\Upsilon}|^2+|\widehat{M}|^2-2\delta_{*}|\xi|Re(\widehat{q}\bar{\widehat{\Upsilon}})}{2}\Big)\\
  &+(\eta+\mu) |\xi|^2 |\widehat{\Upsilon}|^2+\nu|\xi|^2 |\widehat{M}|^2-\delta_{*}|\xi|^2|\widehat{\Upsilon}|^2+P'(1)\delta_{*}|\xi|^2|\widehat{q}|^2 \\ &=\delta_{*}(\eta+\mu)|\xi|^3 Re(\widehat{q}\bar{\widehat{\Upsilon}})\\
  &\leq \frac{\delta_{*}(\eta+\mu)^2}{2P'(1)}|\xi|^4 |\widehat{\Upsilon}|^2+\frac{P'(1)\delta_{*}}{2}|\xi|^2|\widehat{q}|^2.
\end{aligned}
\end{eqnarray}
By noticing
\begin{eqnarray}\nonumber
0<\delta_{*}\leq \min\Big\{\frac{1}{2},\frac{\eta+\mu}{2}\Big\},
\end{eqnarray}
it can get from (\ref{B.8})
\begin{eqnarray}\label{B.9}
\begin{aligned}
  &\frac{d}{dt}\Big(\frac{P'(1)|\widehat{q}|^2+|\widehat{\Upsilon}|^2+|\widehat{M}|^2-2\delta_{*}|\xi|Re(\widehat{q}\bar{\widehat{\Upsilon}})}{2}\Big)+\frac{\eta+\mu}{2} |\xi|^2 |\widehat{\Upsilon}|^2+\nu|\xi|^2 |\widehat{M}|^2+\frac{\delta_{*}P'(1)}{2}|\xi|^2|\widehat{q}|^2 \\
  &\leq \frac{(\eta+\mu)^2}{4P'(1)}|\xi|^4 |\widehat{\Upsilon}|^2,
\end{aligned}
\end{eqnarray}
further, noticing a small constant $b_{0}$, for
\begin{eqnarray}\nonumber
|\xi|\leq b_{0}\leq \sqrt{\frac{P'(1)}{\eta+\mu}},
\end{eqnarray}
it can obtain from (\ref{B.9})
\begin{eqnarray}\label{B.10}
\begin{aligned}
  \frac{d}{dt}\ell(\xi,t)+\frac{\eta+\mu}{4} |\xi|^2 |\widehat{\Upsilon}|^2+\nu|\xi|^2 |\widehat{M}|^2+\frac{\delta_{*}P'(1)}{2}|\xi|^2|\widehat{q}|^2 \leq 0,
\end{aligned}
\end{eqnarray}
where
\begin{eqnarray}\nonumber
\ell(\xi,t):=\frac{P'(1)|\widehat{q}|^2+|\widehat{\Upsilon}|^2+|\widehat{M}|^2-2\delta_{*}|\xi|Re(\widehat{q}\bar{\widehat{\Upsilon}})}{2}.
\end{eqnarray}
Because of $\delta_{*}b_{0}\leq\min\{\frac{P'(1)}{2},\frac{1}{2}\}$, one has
\begin{eqnarray}\nonumber
\ell(\xi,t)\sim|\widehat{q}|^2+|\widehat{\Upsilon}|^2+|\widehat{M}|^2,
\end{eqnarray}
so, for $|\xi|\leq b_{0}$, $\exists~\text{constant}~C_{l}>0$ such that
\begin{eqnarray}\label{B.11}
C_{l}|\xi|^2 \ell(\xi,t)\leq \frac{\eta+\mu}{4} |\xi|^2 |\widehat{\Upsilon}|^2+\nu|\xi|^2 |\widehat{M}|^2+\frac{\delta_{*}P'(1)}{2}|\xi|^2|\widehat{q}|^2.
\end{eqnarray}
Thus, from (\ref{B.10}) and (\ref{B.11}), it obtains
\begin{eqnarray}\label{B.12}
\ell(\xi,t)\leq  e^{-C_{l}|\xi|^2 t}\ell(\xi,0),~|\xi|\leq b_{0}.
\end{eqnarray}
\subsection{The study of middle-frequency portion}

Let's calculate the characteristic polynomial of $\mathbb{Q}$
\begin{eqnarray}\nonumber
\begin{aligned}
\mathbb{Q}_{\lambda_{0}}=|Q-\lambda_{0}I|
=\Big(\lambda_{0}^2-(\eta+\mu)|\xi|^2 \lambda_{0} +P'(1)|\xi|^2 \Big)\Big(\nu|\xi|^2-\lambda_{0}\Big)=-\alpha_{0}\lambda_{0}^3+\alpha_{1}\lambda_{0}^2-\alpha_{2}\lambda_{0}+\alpha_{3},
\end{aligned}
\end{eqnarray}
here
\begin{eqnarray}\nonumber
\begin{aligned}
\alpha_{0}=1,~~ \alpha_{1}=(\eta+\mu)|\xi|^2+\nu|\xi|^2,~~ \alpha_{2}=\nu(\eta+\mu)|\xi|^4+P'(1)|\xi|^2,~~ \alpha_{3}=\nu P'(1)|\xi|^4.
\end{aligned}
\end{eqnarray}
Since $\alpha_{0}$-$\alpha_{4}$ all positive, by Routh-Hurwitz theorem, all roots of $\mathbb{Q}_{\lambda_{0}}$ have positive real part if and only if $\mathbb{Q}_{1}>0$ and $\mathbb{Q}_{2}>0$
\begin{eqnarray}\nonumber
\begin{aligned}
&\mathbb{Q}_{1}=\alpha_{1}=(\eta+\mu)|\xi|^2+\nu|\xi|^2>0,\\
&\mathbb{Q}_{2}=
\begin{vmatrix}
\alpha_{1}&\alpha_{0}\\
\alpha_{3}&\alpha_{2}
\end{vmatrix}
=\nu(\eta+\mu)^2|\xi|^6+(\eta+\mu)P'(1)|\xi|^4+\nu^2(\eta+\mu)|\xi|^6>0.
\end{aligned}
\end{eqnarray}
According to the analysis in subsection 3.3 proofed in \cite{Danchin}, Lemma \ref{LB1} is attainable.
\begin{Lemma}\label{LB1}
For $\forall~\text{constant}~\mathfrak{b}~\text{and}~\mathfrak{B}~(0<\mathfrak{b}<\mathfrak{B})$, $\exists$ constant $\varsigma>0$ which relies only on $\eta$, $\nu$, $\mathfrak{b}$ and $\mathfrak{B}$ such that for $\forall~\mathfrak{b}\leq |\xi|\leq \mathfrak{B}$ and $t\in R^{+}$
\begin{eqnarray}\label{B.13}
\begin{aligned}
|e^{-t\mathbb{Q}}|\leq Ce^{-\varsigma t}.
\end{aligned}
\end{eqnarray}
\end{Lemma}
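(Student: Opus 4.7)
The plan is to combine the Routh--Hurwitz spectral information already derived in the excerpt with a compactness argument on the annulus $\mathfrak{b}\le|\xi|\le\mathfrak{B}$. The first observation is that $\mathbb{Q}(\xi)$ is block-diagonal: a $2\times 2$ upper block
$$\mathbb{Q}'(\xi)=\begin{pmatrix}0 & |\xi|\\ -P'(1)|\xi| & (\eta+\mu)|\xi|^2\end{pmatrix}$$
acting on $(\widehat{q},\widehat{\Upsilon})$, and a scalar block $\nu|\xi|^2$ acting on $\widehat{M}$. The latter contributes $|e^{-\nu|\xi|^2 t}|\le e^{-\nu\mathfrak{b}^2 t}$ for free whenever $|\xi|\ge\mathfrak{b}$, so the task reduces to bounding $e^{-t\mathbb{Q}'(\xi)}$.

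Writing $r=|\xi|$, the eigenvalues of $\mathbb{Q}'(r)$ are
$$\lambda_\pm(r)=\tfrac{1}{2}\Bigl[(\eta+\mu)r^2\pm\sqrt{(\eta+\mu)^2 r^4-4P'(1)r^2}\Bigr].$$
In the complex-conjugate case $\mathrm{Re}\,\lambda_\pm=(\eta+\mu)r^2/2>0$; in the real case both roots are positive since their sum and product are positive. This is precisely the Routh--Hurwitz conclusion already verified in the excerpt. Since $r\mapsto\mathrm{Re}\,\lambda_\pm(r)$ is continuous on the compact interval $[\mathfrak{b},\mathfrak{B}]$, I would set
$$\varsigma_0:=\inf_{r\in[\mathfrak{b},\mathfrak{B}]}\min\bigl\{\mathrm{Re}\,\lambda_+(r),\ \mathrm{Re}\,\lambda_-(r),\ \nu r^2\bigr\}>0,$$
a quantity depending only on $\eta,\nu,P'(1),\mathfrak{b},\mathfrak{B}$.

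To convert this spectral lower bound into a uniform exponential bound on $e^{-t\mathbb{Q}'}$, I would invoke the Cayley--Hamilton representation $e^{-tA}=a_0(t)I+a_1(t)A$ valid for any $2\times 2$ matrix $A$, in which $a_0(t),a_1(t)$ are obtained by Hermite-interpolating $\lambda\mapsto e^{-t\lambda}$ at the eigenvalues of $A$. A direct computation shows $|a_j(t)|\le C(1+t)e^{-(\min\mathrm{Re}\,\lambda_\pm)\,t}$, the extra $(1+t)$ factor absorbing the degenerate case $\lambda_+=\lambda_-$. Choosing $\varsigma:=\varsigma_0/2$ soaks up this polynomial factor into the exponential, and combining with the magnetic block yields $|e^{-t\mathbb{Q}(\xi)}|\le Ce^{-\varsigma t}$ uniformly on the annulus, as claimed.

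The main obstacle is the (at most two) values of $r\in[\mathfrak{b},\mathfrak{B}]$ at which the discriminant $(\eta+\mu)^2 r^2-4P'(1)$ vanishes and $\mathbb{Q}'(r)$ becomes a genuine Jordan block: the naive similarity bound $|S(r)|\,|S(r)^{-1}|\,e^{-\mathrm{Re}\,\lambda_\pm t}$ blows up there because the diagonalizing matrix $S(r)$ is singular. The Cayley--Hamilton route bypasses this obstruction by replacing the similarity-dependent estimate with a direct polynomial-in-$t$ expression whose coefficients remain continuous (indeed analytic) in $r$ through the coalescence, so that compactness on $[\mathfrak{b},\mathfrak{B}]$ delivers a single constant $C$. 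This is in line with Danchin's semigroup analysis for compressible fluid systems cited after the statement.
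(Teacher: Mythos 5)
Your proposal is correct, and it actually supplies more than the paper's own proof does. The paper's argument for this lemma consists of computing the characteristic polynomial $\mathbb{Q}_{\lambda_0}=\big(\lambda_0^2-(\eta+\mu)|\xi|^2\lambda_0+P'(1)|\xi|^2\big)\big(\nu|\xi|^2-\lambda_0\big)$, checking the Routh--Hurwitz conditions $\mathbb{Q}_1>0$, $\mathbb{Q}_2>0$ to conclude that the spectrum lies in the open right half-plane, and then delegating the passage from this spectral positivity to the uniform semigroup bound (\ref{B.13}) entirely to subsection 3.3 of Danchin--Ducomet \cite{Danchin}. You fill in precisely that outsourced step: your block splitting of $\mathbb{Q}$ into the $(\widehat{q},\widehat{\Upsilon})$ block and the heat block $\nu|\xi|^2$ (acting as $\nu|\xi|^2 I$ on the three components of $\widehat{M}$, which changes nothing) matches the paper's matrix, your eigenvalue formulas agree with the factorization above, and the Cayley--Hamilton/Hermite-interpolation representation $e^{-tA}=a_0(t)I+a_1(t)A$ with $|a_j(t)|\leq C(1+t)e^{-\varsigma_0 t}$, followed by halving $\varsigma_0$ to absorb the polynomial factor, is a standard and correct way to get a bound that is uniform through the coalescence radius $r^2=4P'(1)/(\eta+\mu)^2$ (should it fall in $[\mathfrak{b},\mathfrak{B}]$), where any diagonalization-based estimate degenerates --- the one genuine trap in this lemma, which you identified explicitly. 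As for what each route buys: the paper's combination of Routh--Hurwitz with the compactness lemma of \cite{Danchin} is shorter and scales to larger coupled systems whose eigenvalues cannot be written down, whereas your argument is elementary and self-contained but leans on the special $2\times 2$ plus scalar structure that makes the eigenvalues explicit. One cosmetic point in your favor: your $\varsigma_0$ visibly depends on $\mu$ and $P'(1)$ as well, so the lemma's claim that $\varsigma$ depends ``only on $\eta$, $\nu$, $\mathfrak{b}$, $\mathfrak{B}$'' is itself slightly imprecise; since $\mu$ and $P'(1)=\gamma$ are fixed parameters of the system, nothing is lost, but your bookkeeping is the more accurate one.
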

From (\ref{B.5}) and (\ref{B.13}), for $\forall~\mathfrak{b}\leq |\xi|\leq \mathfrak{B}$, one has
\begin{eqnarray}\label{B.14}
\begin{aligned}
|(\widehat{q},\widehat{\Upsilon},\widehat{M})(\xi,t)|=|e^{-t\mathbb{Q}}(\widehat{q},\widehat{\Upsilon},\widehat{M})(\xi,0)|\leq Ce^{-\varsigma t}|(\widehat{q},\widehat{\Upsilon},\widehat{M})(\xi,0)|.
\end{aligned}
\end{eqnarray}
\subsection{Estimation of $\widehat{\Gamma u}(\xi,t)$}
Taking Fourier transform on (\ref{B.2}), the linearized portion of that is as below
\begin{eqnarray}\label{B.15}
   \partial_{t}\widehat{\Gamma u} +\mu |\xi|^2\widehat{\Gamma u} = 0,
\end{eqnarray}
for $\forall~|\xi|\geq 0$, one has
\begin{eqnarray}\label{B.16}
   |\widehat{\Gamma u}(\xi,t)|^2\leq C e^{-\mu |\xi|^2 t}|\widehat{\Gamma u}(\xi,0)|^2.
\end{eqnarray}

\vskip2mm
\renewcommand\refname{References}


\begin{thebibliography}{10}

\bibitem{Chenqing}
Q.~Chen and Z.~Tan, ``Global existence and convergence rates of smooth
  solutions for the compressible magnetohydrodynamic equations,'' {\em
  Nonlinear Analysis: Theory Methods and Applications}, vol.~72, no.~12,
  pp.~4438--4451, 2010.

\bibitem{Chenyuhui}
Y.~H. Chen, J.~C. Huang, and H.~Y. Xu, ``Global stability of large solutions of
  the 3-D compressible magnetohydrodynamic equations,'' {\em Nonlinear
  Analysis: Real World Applications}, vol.~47, pp.~272--290, 2019.

\bibitem{Ducomet}
B.~Ducomet and E.~Feireisl, ``The equations of magnetohydrodynamics: On the
  interaction between matter and radiation in the evolution of gaseous stars,''
  {\em Communications in Mathematical Physics}, vol.~266, no.~3, pp.~595--629,
  2006.

\bibitem{Danchin}
R.~Danchin and B.~Ducomet, ``On a simplified model for radiating flows,'' {\em
  Journal of Evolution Equations}, vol.~14, no.~1, pp.~155--195, 2014.

\bibitem{Fanjishan3}
J.~S. Fan and W.~H. Yu, ``Strong solution to the compressible
  magnetohydrodynamic equations with vacuum,'' {\em Nonlinear Analysis: Real
  World Applications}, vol.~10, no.~1, pp.~392--409, 2009.

\bibitem{Gao1}
J.~C. Gao, Y.~H. Chen, and Z.~A. Yao, ``Long-time behavior of solution to the
  compressible magnetohydrodynamic equations,'' {\em Nonlinear Analysis},
  vol.~128, pp.~122--135, 2015.

\bibitem{Gao2}
J.~C. Gao, Q.~Tao, and Z.~A. Yao, ``Optimal decay rates of classical solutions
  for the full compressible mhd equations,'' {\em Zeitschrift f\"{u}r angewandte
  Mathematik und Physik}, vol.~67, no.~2, 2016.

\bibitem{Gao3}
J.~C. Gao, Z.~Z. Wei, and Z.~A. Yao, ``Decay rate of strong solution for the
  compressible magnetohydrodynamic equations with large initial data,'' {\em
  Applied Mathematics Letters}, vol.~102, 2020.

\bibitem{Guoyan}
Y.~Guo and Y.~J. Wang, ``Decay of dissipative equations and negative sobolev
  spaces,'' {\em Communications in Partial Differential Equations}, vol.~37,
  no.~12, pp.~2165--2208, 2012.

\bibitem{Hongguangyi}
G.~Y. Hong, X.~F. Hou, H.~Y. Peng, and C.~J. Zhu, ``Global existence for a
  class of large solutions to three-dimensional compressible
  magnetohydrodynamic equations with vacuum,'' {\em SIAM Journal on
  Mathematical Analysis}, vol.~49, no.~4, pp.~2409--2441, 2017.

\bibitem{Huangwenting}
W.~T. Huang, X.~Y. Lin, and W.~W. Wang, ``Decay-in-time of the highest-order
  derivatives of solutions for the compressible isentropic mhd equations,''
  {\em Journal of Mathematical Analysis and Applications}, vol.~502, no.~2,
  pp.~125273, 2021.

\bibitem{Huxianpeng1}
X.~P. Hu and D.~H. Wang, ``Global solutions to the three-dimensional full
  compressible magnetohydrodynamic flows,'' {\em Communications in Mathematical
  Physics}, vol.~283, no.~1, pp.~255--284, 2008.

\bibitem{Huxianpeng2}
X.~P. Hu and D.~H. Wang, ``Global existence and large-time behavior of
  solutions to the three-dimensional equations of compressible
  magnetohydrodynamic flows,'' {\em Archive for Rational Mechanics and
  Analysis}, vol.~197, no.~1, pp.~203--238, 2010.

\bibitem{Lifucai}
F.~C. Li and H.~J. Yu, ``Optimal decay rate of classical solutions to the
  compressible magnetohydrodynamic equations,'' {\em Proceedings of the Royal
  Society of Edinburgh. Section A: Mathematics}, vol.~141, no.~1, pp.~109--126,
  2011.

\bibitem{Lihailiang}
H.~L. Li, X.~Y. Xu, and J.~W. Zhang, ``Global classical solutions to 3D
  compressible magnetohydrodynamic equations with large oscillations and
  vacuum,'' {\em SIAM Journal on Mathematical Analysis}, vol.~45, no.~3,
  pp.~1356--1387, 2013.

\bibitem{SLiu}
S.~Q. Liu, H.~B.~Y. a, and J.~W. Zhang, ``Global weak solutions of 3D
  compressible mhd with discontinuous initial data and vacuum,'' {\em Journal
  of Differential Equations}, vol.~254, no.~1, p.~229--255, 2013.

\bibitem{Puxueke}
X.~K. Pu and B.~L. Guo, ``Global existence and convergence rates of smooth
  solutions for the full compressible mhd equations,'' {\em Zeitschrift f\"{u}r
  angewandte Mathematik und Physik}, vol.~64, no.~3, pp.~519--538, 2013.

\bibitem{Shi}
W.~X. Shi and J.~Z. Zhang, ``A remark on the time-decay estimates for the
  compressible magnetohydrodynamic system,'' {\em Applicable Analysis},
  vol.~100, no.~11, pp.~2478--2498, 2021.

\bibitem{Suen1}
A.~Suen, ``Existence and uniqueness of low-energy weak solutions to the
  compressible 3D magnetohydrodynamics equations,'' {\em Journal of
  Differential Equations}, vol.~268, no.~6, pp.~2622--2671.

\bibitem{Suen2}
A.~Suen and D.~Hoff, ``Global low-energy weak solutions of the equations of
  three-dimensional compressible magnetohydrodynamics,'' {\em Archive for
  Rational Mechanics and Analysis}, vol.~205, no.~1, pp.~27--58.

\bibitem{Tanzhong}
Z.~Tan and H.~Q. Wang, ``Optimal decay rates of the compressible
  magnetohydrodynamic equations,'' {\em Nonlinear Analysis: Real World
  Applications}, vol.~14, no.~1, pp.~188--201, 2013.

\bibitem{Vol'pert}
A.~I. Vol'pert and S.~I. Hudjaev, ``On the cauchy problem for composite systems
  of nonlinear differential equations,'' {\em Mathematics of the USSR-Sbornik},
  vol.~16, no.~4, pp.~517--544, 1972.

\bibitem{Wangdehua}
D.~H. WANG, ``Large solutions to the initial-boundary value problem for
  planar magnetohydrodynamics,'' {\em Society for Industrial and Applied
  Mathematics}, vol.~63, no.~4, pp.~1424--1441, 2003.

\bibitem{Womensa}
S.~Wang, F.~Chen, and C.~Wang, ``Estimation of decay rate to large-solution of
  3D compressible magnetohydrodynamic system,'' {\em submitted}.

\bibitem{Wangwenjun}
W.~J. Wang and H.~Y. Wen, ``Global well-posedness and time-decay estimates for
  compressible navier-stokes equations with reaction diffusion,'' {\em Science
  China Mathematics}, vol.~65, pp.~1199--1228, 2022.

\bibitem{Wu}
G.~C. Wu, Y.~H. Zhang, and W.~Y. Zou, ``Optimal time-decay rates for the 3D
  compressible magnetohydrodynamic flows with discontinuous initial data and
  large oscillations,'' {\em Journal of the London Mathematical Society},
  vol.~103, no.~3, pp.~817--845, 2020.

\bibitem{Zhangjianwen}
J.~W. Zhang and J.~N. Zhao, ``Some decay estimates of solutions for the 3-D
  compressible isentropic magnetohydrodynamics,'' {\em Communications in
  mathematical sciences}, vol.~8, no.~4, pp.~835--850, 2010.

\bibitem{Zhulimei}
L.~M. Zhu and R.~Z. Zi, ``Decay estimates of the smooth solution to the
  compressible magnetohydrodynamic equations on $T^3$,'' {\em Journal of
  Differential Equations}, vol.~288, pp.~1--39, 2021.

\end{thebibliography}
\end{document}